\documentclass{article}

\usepackage[T1]{fontenc}
\usepackage[english]{babel}

\usepackage[a4paper,top=2cm,bottom=2cm,left=2cm,right=2cm,marginparwidth=1.75cm]{geometry}

\usepackage{amsmath}
\usepackage{comment}
\usepackage{bbold}
\usepackage{dsfont}
\usepackage{graphicx}
\usepackage[colorlinks=true, allcolors=blue]{hyperref}
\usepackage{amsthm}
\usepackage{wrapfig}
\usepackage{amssymb}
\usepackage{stmaryrd}
\usepackage{csquotes}
\usepackage{biblatex}
\usepackage{orcidlink}
\newtheorem{prop}{Proposition}
\newtheorem{theorem}[prop]{Theorem}
\newtheorem{lemma}[prop]{Lemma}

\newtheorem*{remark}{Remark}

\newcommand{\Mod}{\,\mathrm{mod}\,}
\newcommand{\e}{\mathrm{e}}
\newcommand{\de}{\,\mathrm{d}}
\newcommand{\ii}{\mathrm{i}}
\newcommand{\Po}{\mathcal{P}}

\newcommand{\I}{\mathcal{I}}

\newcommand{\Z}{\mathds{Z}}

\newcommand{\R}{\mathds{R}}

\newcommand{\Ind}{\mathbb{1}}
\newcommand{\eps}{\varepsilon}

\newcommand{\starsum}{\ \sideset{}{^*}\sum}

\title{A Bombieri-Vinogradov theorem for exponential sums over products of $k$ primes}
\author{P\'ea Bazin \orcidlink{0009-0000-7184-9130} \footnote{Universit\'e Paris Cit\'e, Sorbonne Universit\'e, CNRS \\ Institut de Math\'ematiques de Jussieu-Paris Rive Gauche, 75013 Paris, France. \\ Email: bazin@imj-prg.fr}}
\date{}

\begin{document}
\maketitle

\begin{abstract}
    We prove a Bombieri-Vinogradov type theorem for exponential sums over products of $k$ primes. As an application, we show the lower bound $$\sup_{n\le x} \left|\sum_{m\le n} \Ind_{\Omega(m) = k} \e(\alpha m)\right| \gg x^{1/6 - \eps}$$ for $2\le k\le (2-\eps)\log\log x$ and $\alpha\in\R,$ where we noted $\e(\beta) := \e^{2\ii\pi\beta}.$ 
\end{abstract}

\section{Introduction}
\subsection{Background}

We note $\Omega(n)$ the number of prime factors of $n$ counted with multiplicity, $f_k(n) := \Ind_{\Omega(n) = k},$ and $$\Pi_k(x) := \sum_{n\le x} f_k(n) = \#\{1\le n\le x : \Omega(n) = k\}.$$ An asymptotic estimate of $\Pi_k$ when $k\le (2-\eps)\log\log x$ has been found by Sathe and Selberg \cite{sathe, selberg}, reminded below. Defining  
\begin{equation}\label{def-H}
    H(z) := \frac{1}{\Gamma(z+1)} \prod_p \left(1 - \frac{1}{p}\right)^z \left(1 - \frac{z}{p}\right)^{-1},
\end{equation}
we have
\begin{lemma}[Sathe \cite{sathe}, Selberg \cite{selberg}]\label{sathe-selberg}
    Let $\eps > 0.$ Then we have for $k \le (2-\eps)\log\log x,$ $$\Pi_k(x) = \frac{x}{\log x}\frac{(\log\log x)^{k-1}}{(k-1)!}\cdot \left( H\left(\frac{k-1}{\log\log x}\right) + O\left(\frac{1}{\log\log x}\right)\right).$$
\end{lemma}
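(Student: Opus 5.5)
This is the classical Selberg--Delange argument. I would study the generating Dirichlet series $F(s,z):=\sum_{n\ge1} z^{\Omega(n)} n^{-s}=\prod_p (1-z p^{-s})^{-1}$ for $|z|<2$ and extract the coefficient of $z^k$ by Cauchy's formula.

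\textbf{Step 1: factorization.} Write $F(s,z)=\zeta(s)^z G(s,z)$ with
\[
G(s,z)=\prod_p (1-p^{-s})^z(1-zp^{-s})^{-1}.
\]
For $\Re s>1/2$ and $|z|\le 2-\eps/2$ the Euler factors are $1+O(|z|^2p^{-2\sigma})$, except at $p=2$, where the factor $(1-z2^{-s})^{-1}$ stays bounded because $|z2^{-s}|\le (2-\eps/2)/2<1$ near $s=1$. Hence $G$ is holomorphic and uniformly bounded in $s$ near $\Re s\ge 1$. Note that $G(1,z)/\Gamma(z+1)=H(z)$ as defined in \eqref{def-H}.

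\textbf{Step 2: Selberg--Delange with uniformity in $z$.} Applying Perron's formula and a Hankel contour around $s=1$ that avoids the zero-free region of $\zeta$, I would obtain uniformly for $|z|\le 2-\eps/2$
\[
S_z(x):=\sum_{n\le x} z^{\Omega(n)}=\frac{x(\log x)^{z-1}}{\Gamma(z)}\left(G(1,z)+O\!\left(\frac{1}{\log x}\right)\right).
\]
More precisely, the main term has the expansion $x(\log x)^{z-1}\bigl(\lambda_0(z)+\lambda_1(z)/\log x+\dots\bigr)$, with $\lambda_0(z)=G(1,z)/\Gamma(z)=zH(z)$, and the implied constants are uniform in $z$. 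This is standard (Tenenbaum, Chapter II.5), but it is where the uniformity must be checked.

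\textbf{Step 3: extracting the coefficient.} Cauchy's formula gives
\[
\Pi_k(x)=\frac{1}{2\pi\ii}\oint_{|z|=r} S_z(x)\,z^{-k-1}\,\mathrm{d}z.
\]
I would take $r=(k-1)/\log\log x$, which is at most $2-\eps$ by hypothesis, so Step 2 applies on the circle. Writing $L=\log\log x$ and $(\log x)^{z-1}=e^{(z-1)L}/\,1$, the main term becomes
\[
\frac{x}{\log x}\cdot\frac{1}{2\pi\ii}\oint_{|z|=r} zH(z)\,e^{zL}z^{-k-1}\,\mathrm{d}z .
\]
Expanding $H$ around $z=r$ and using the saddle point of $e^{zL}z^{-k}$ at $z=k/L\approx r$, I would show this integral equals
\[
\frac{L^{k-1}}{(k-1)!}\bigl(H(r)+O(1/L)\bigr).
\]
The order-zero term $H(r)$ comes out exactly: replacing $H(z)$ by $H(r)$ leaves $\oint e^{zL}z^{-k}\,\mathrm{d}z/(2\pi\ii)=L^{k-1}/(k-1)!$. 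The remaining term $(z-r)H'(r)$, together with the quadratic Taylor remainder, contributes $O\bigl(L^{k-1}/((k-1)!\,L)\bigr)$, since $\oint |z-r|^j\,|e^{zL}z^{-k}|\,|\mathrm{d}z|$ gains a factor $L^{-j/2}$ and the linear term vanishes at the saddle up to $O(1/L)$.

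\textbf{Main obstacle.} The step I expect to be hardest is the error term of Step 2. It contributes
\[
\frac{x}{\log x}\oint e^{\Re z\, L}\,\frac{1}{\log x}\,r^{-k}\,|\mathrm{d}z|\ll \frac{x}{(\log x)^2}e^{rL}r^{-k},
\]
and by Stirling this is smaller than the main term $\frac{x}{\log x}\frac{L^{k-1}}{(k-1)!}$ by a factor of roughly $\sqrt{k}/\log x$, which is more than enough. The main-term saddle analysis is more delicate, specifically the uniformity for small $k$, where the circle radius $r$ tends to $0$. For $k$ bounded I would switch to radius $r=1$, or simply note that $H$ is entire-type smooth near $0$, so the expansion still yields the $O(1/L)$ error.
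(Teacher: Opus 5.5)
Your proposal is correct and is Selberg's method (as in Tenenbaum, Ch.~II.6), which is the argument the paper relies on: the paper gives no proof of this lemma and only cites Sathe and Selberg. One small fix: with $r=(k-1)/\log\log x$ the linear Taylor term vanishes exactly, and the degenerate case $k=1$ (where $r=0$) is just the prime number theorem, or take $r=1/\log\log x$; your suggested radius $r=1$ would only give a relative error $O((\log\log x)^{-1/2})$ there, not the required $O(1/\log\log x)$.
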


We are interested here in the exponential sum over products of $k$ primes, $$\Pi_k(x;\alpha) := \sum_{n\le x} f_k(n) \e(\alpha n).$$
This exponential sum has been studied by Dupain, Hall and Tenenbaum \cite{dht, tenenbaum}, who obtained the upper bound \cite[Theorem A]{tenenbaum}
\begin{equation}
    \Pi_k(x;\alpha) \ll \Pi_k(x)\cdot \left(\frac{\log\log\log x}{\log\log x} + R(x,\alpha)^{-\eps/2}\right)
\end{equation}
for $2\le k\le (2-\eps)\log\log x,$ where $R(x,\alpha) := \min\left\{Q: \exists a,q\le Q, \left|\alpha - \frac{a}{q}\right| \le \frac{Q}{qx}\right\}.$
\begin{remark}
    The Sathe--Selberg and Dupain--Hall--Tenebaum results have also been proven when $f_k$ is replaced by $h_k := \Ind_{\omega(n) = k}$ the indicator function of integers with $k$ distinct prime factors. Moreover, for $h_k,$ these results also hold in the larger range $$k \le C \log\log x$$ (with constants depending on $C$). The proof of our results below can then also be easily adapted to obtain the same results for $h_k,$ in the larger range $k\le C\log\log x.$
\end{remark}

We obtain in this paper a Bombieri--Vinogradov estimate for $\Pi_k(x;a/q)$ (Theorem \ref{main} below) in the same range of $k,$ from which we deduce the lower bound $$\sup_{n\le x} |\Pi_k(n;\alpha)| \gg x^{1/6-\eps}$$ uniformly in $\alpha\in\R$ (Theorem \ref{uniform-pik} below) using the author's previous work in \cite{self}.

\subsection{Main results}

The main purpose of this paper is to prove the following Bombieri-Vinogradov estimate for exponential sums over products of $k$ primes,

\begin{theorem}\label{main}
    Let $\eps, C > 0.$ There exists $B$ depending only on $C$ such that when $Q \le x^{1/3}(\log x)^{-B}, \theta \le Q^{-3}(\log x)^{-B}$  and $1 \le k \le (2-\eps)\log\log x,$ we have
    \begin{equation}\label{main-eq}
        \sum_{q\le Q} \max_{(a,q) = 1} \max_{|\lambda|\le\theta} \left|\sum_{n\le x} f_k(n) \e\left(\left(\frac{a}{q}+\lambda\right)n\right) - \frac{1}{\phi(q)}\sum_{st = q} \mu(s) t\sum_{\substack{n\le x \\ t|n}} f_k(n)\e(\lambda n) \right| \ll \frac{x}{(\log x)^C}.
    \end{equation}
\end{theorem}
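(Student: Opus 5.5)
We will reduce Theorem \ref{main} to a Bombieri--Vinogradov estimate for $f_k$ in arithmetic progressions, twisted by $\e(\lambda n)$. The first step is to expand the additive character at $a/q$ into multiplicative characters. Splitting $n$ according to $t=(n,q)$ and writing $n=tm$ with $(m,q/t)=1$, we have
$$\e(amt/q)=\frac{1}{\phi(q/t)}\sum_{\chi \bmod q/t}\bar\chi(\bar a m)\tau(\chi)\quad\text{wait}$$
and more cleanly $\e(a m/r) = \frac{1}{\phi(r)}\sum_{\chi\bmod r}\chi(\bar a)\tau(\bar\chi)\chi(m)$ for $(m,r)=1$, $r=q/t$. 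The principal characters contribute $\frac{\mu(r)}{\phi(r)}\sum_{(n,q)=t} f_k(n)\e(\lambda n)$. Regrouping the coprimality condition by Möbius inversion should reproduce exactly the main term $\frac{1}{\phi(q)}\sum_{st=q}\mu(s)t\sum_{t|n}f_k(n)\e(\lambda n)$; checking this identity is a routine calculation. The nonprincipal characters give an error bounded by
$$\sum_{t r=q}\frac{\sqrt r}{\phi(r)}\sum_{\chi\ne\chi_0\bmod r}\Big|\sum_{\substack{m\le x/t\\ (m,r)=1}} f_k(tm)\chi(m)\e(\lambda t m)\Big|,$$
using $|\tau(\chi)|\le\sqrt r$. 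Since $\Omega(tm)=\Omega(t)+\Omega(m)$ when $(m,t)=1$, the inner function is essentially $f_{k-\Omega(t)}(m)$ up to the common primes of $t$ and $m$, which we will handle by a further decomposition $m=m_1m_2$ with $m_1\mid t^\infty$. Large $t$ (say $t>(\log x)^{B'}$) will be treated trivially, since then $x/t$ is small.

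Next we remove the twist $\e(\lambda n)$ by partial summation. The twist is allowed because $|\lambda|x\le xQ^{-3}(\log x)^{-B}$. Partial summation costs a factor $1+|\lambda|x$, so it reduces matters to bounding $\sum_{q\le Q}\sqrt q\,(1+\theta x)\,\max_{y\le x}\frac1{\phi(q)}\sum_{\chi\ne\chi_0}|\sum_{m\le y} f_{j}(m)\chi(m)|$, together with a similar sum over primitive characters after reduction. Passing to primitive characters $\chi$ mod $r'\mid r$ and summing dyadically over $r'\sim R$, we need a bound of the shape
$$\sum_{r'\sim R}\ \starsum_{\chi\bmod r'}\max_{y\le x}\Big|\sum_{m\le y}f_j(m)\chi(m)\Big|\ll \frac{x\,R^{1/2}}{(1+\theta x)(\log x)^{C'}},$$
uniformly in $j\le k$. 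The numerology has to match: the weight $\sqrt q/\phi(q)$ with $Q\le x^{1/3}$ and the factor $\theta x\le x/Q^3$ are precisely what this bound should absorb.

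The bound above is proved by a bilinear-form decomposition. For $R\le(\log x)^{B''}$ we use Siegel--Walfisz type results for $f_j$ twisted by a nonprincipal character. These follow from the Selberg--Delange method applied to $L(s,\chi)^z$ with $z$ complex, extracting the coefficient of $z^j$ by Cauchy's formula as in the proof of Lemma \ref{sathe-selberg}, and they give arbitrary log savings. For larger $R$ we write $f_j(m)=\sum_{p\,\ell=m}f_{j-1}(\ell)/j$ weighted suitably; more precisely, we use $j f_j(m)=\sum_{p^\nu\| m}\nu f_{j-1}(m/p)$-type identities, or we split off the largest prime factor to get a Type II sum $\sum_{p}\sum_\ell \alpha_\ell\beta_p\chi(p\ell)$. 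Such a sum is handled by Cauchy--Schwarz and the multiplicative large sieve $\sum_{r\le R}\frac{r}{\phi(r)}\starsum|\sum a_n\chi(n)|^2\le(N+R^2)\|a\|^2$. Integers whose largest prime factor is very small or very large are handled separately: smooth numbers contribute little, and when the largest prime factor exceeds $x/R^2$ we get a Type I/prime sum handled by Bombieri--Vinogradov for primes.

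The main obstacle is this bilinear step: making the decomposition into well-factorable variables uniform in $k$ up to $(2-\eps)\log\log x$ while keeping both variable ranges in $[R^2(\log x)^{B},x/R^2(\log x)^{-B}]$. The large sieve only gives $(x^{1/2}+R\,x^{1/2}\ldots)$ savings, which combined with the loss $\sqrt R\,(1+\theta x)$ is why the ranges $Q\le x^{1/3}$ and $\theta\le Q^{-3}$ appear. We would first try a Heath-Brown/Vaughan-like identity for $f_j$ obtained from $f_j = \frac1j\sum \Lambda$-convolutions (via $\log$-differentiation of $L^z$), which expresses $f_j$ through $\Lambda * f_{j-1}$-type convolutions. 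We would then run Vaughan's identity on $\Lambda$ inside the convolution.
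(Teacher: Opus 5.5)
Your first step (Gauss-sum expansion, with the main term coming from the principal characters) agrees with the paper's Lemma~\ref{decomp} and the remark after it. The proof breaks where you strip $\e(\lambda n)$ by partial summation.

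\textbf{The twist step.} Partial summation costs a factor $1+|\lambda|x$, and under the hypotheses of Theorem~\ref{main} this factor is never harmless. Since $Q^3\le x(\log x)^{-3B}$, the admissible $\theta=Q^{-3}(\log x)^{-B}$ gives $\theta x\ge(\log x)^{2B}$, and $\theta x\ge x^{3\delta}(\log x)^{-B}$ as soon as $Q\le x^{1/3-\delta}$. So the numerology does not match:
\begin{itemize}
\item At $Q=x^{1/3}(\log x)^{-B}$, the term $x^{1/2}R^2$ that any large-sieve bound produces, weighted by $R^{-1/2}$ at $R\asymp Q$, is $x(\log x)^{-3B/2}$. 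Multiplying by $\theta x=(\log x)^{2B}$ leaves $x(\log x)^{B/2}$, so there is no saving.
\item For smaller $Q$ the loss is a power of $x$. Neither the term $x$ of a bilinear bound nor the $\e^{-c\sqrt{\log x}}$ saving of a Selberg--Delange Siegel--Walfisz estimate can absorb it.
\end{itemize}
The hypothesis $\theta\le Q^{-3}(\log x)^{-B}$ reflects a loss of $\theta^{1/2}$, not of $\theta x$.

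The paper keeps $\max_{|\lambda|\le\theta}$ inside the bilinear estimate. It uses Perron's formula with kernel $U(s,x,\lambda)=\int_1^x u^{s-1}\e(\lambda u)\de u$, which decays like $|\tau|^{-1/2}$ for $|\tau|\le 4\pi\theta x$ and like $|\tau|^{-1}$ beyond (Lemma~\ref{lambda-term}). Combined with Montgomery's mean-value theorem $\int_T^{2T}\sum_{q\le Q}\starsum_{\chi}|\sum_m a(m)\chi(m)m^{-1/2-\ii t}|^2\de t\ll(M+Q^2T)\sum_m|a(m)|^2m^{-1}$, this yields the term $xQ^2\theta^{1/2}$, and $Q^{-1/2}\cdot xQ^2\theta^{1/2}\le x(\log x)^{-B/2}$.
\begin{itemize}
\item In the Type I pieces the splitting parameter is taken $\le 1/(2Q\theta)$. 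The twist on the long variable then stays $\le 1/(2Q)$, so the Gauss-sum argument of Lemma~\ref{type-I} applies.
\item For conductors $\le(\log x)^{O(1)}$ one needs $\Pi_k(x;\chi\e_\lambda)\ll x\,(q^{1/2}|\lambda|^{1/4-\eps}+\e^{-c\sqrt{\log x}})$ with no factor $1+|\lambda|x$ (Lemma~\ref{sw-pik}). For $|\lambda|>\e^{c\sqrt{\log x}/2}/x$ this comes from treating the sum as a minor-arc sum via Granville--Lamzouri and Vinogradov. Your plan has nothing playing this role.
\end{itemize}

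\textbf{The bilinear step.} This is also not yet an argument.
\begin{itemize}
\item Your window $[R^2(\log x)^B,\,x/R^2(\log x)^{-B}]$ is empty once $R>x^{1/4}$. With the weight $R^{-1/2}$, the term $x^{1/2}R(M+N)^{1/2}$ only forces both variables to be $\ge R(\log x)^{O(1)}$.
\item Splitting off one prime gives a usable bilinear form only when that prime lies in the window. Integers whose prime factors all lie below it are not negligible at the scale $x(\log x)^{-C}$ (for $k\ge 3$), so they must be regrouped.
\item Your $\Lambda*f_{j-1}$ identity, applied once, leaves $f_{j-1}$ as a factor that is neither smooth nor short. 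Iterating $jf_j=\sum_{a\ge1}\Ind_{\{p^a\}}*f_{j-a}$ instead creates $\asymp\log\log x$ factors and a $(\log x)^{O(\log\log x)}$ loss.
\end{itemize}
The paper uses $1*(\mu f^-)=f^+$ to write $f_k=f_kf^-+(f_{k-1}f^-)*(\mu f^-)*1-f_{k-1}f^-$ on $[1,x]$. It then factors each friable $n$ greedily as $n=ab$ with $x^{1/3}<a\le x^{1/3}P^-(a)$ and $P^-(a)\ge P^+(b)$, so both factors are $\le x^{2/3}$. It separates $\Ind_{P^-(a)>P^+(b)}$ by the integral of Iwaniec--Kowalski Lemma 13.11 and uses $f_k(ab)=\sum_\ell f_\ell(a)f_{k-\ell}(b)$. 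Only boundedly many Type I/Type II factors appear.

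\textbf{Two smaller points.}
\begin{itemize}
\item Large $t$ cannot be handled trivially. The trivial bound gives $\sum_{t>T}\frac{x}{t}(Q/t)^{3/2}\asymp xQ^{3/2}T^{-3/2}$, which is small only for $T\gg Q$. The paper instead applies its bound to $\Pi_{k-\Omega(t)}(x/t;\chi\e_{\lambda t})$ for every $t$.
\item $\Omega$ is completely additive, so $f_k(tm)=f_{k-\Omega(t)}(m)$ with no coprimality condition; your $m=m_1m_2$ decomposition is unnecessary.
\end{itemize}
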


Theorem \ref{main} already appears in the literature in a paper of Yao \cite{yao}, however it appears their proof fails when $Q \ge x^{c_0},$ where
\begin{equation*}%\label{def-co}
    c_0 := \inf_{1/2 \le \sigma \le 4/5} \frac{1-\sigma}{6\frac{1-\sigma}{2-\sigma} - 1/2} = 2\left(2\sqrt{3}-1\right)^{-2} \approx 0.3294
\end{equation*}
(here, the inf is reached for $\sigma = 1 - \big(2\sqrt{3}-1\big)^{-1} \approx 0.5942$).
Indeed, the proof requires on \cite[p. 2008]{yao} the bound \begin{equation}\label{cond29}
        x^{\sigma + \eps} Q^{6\frac{1-\sigma}{2-\sigma} - 1/2} (1+\theta x)^{3\frac{1-\sigma}{2-\sigma} - 1/2} \ll \Pi_k(x)(\log x)^{-A}
\end{equation} for $1/2 \le \sigma \le 4/5,$ which only holds up to $Q = x^{c_0-\eps}$ when $\theta < 1/x.$ Our method is able to recover the missing range and reach the $x^{1/3-\eps}$ bound.

Simpler cases of Theorem \ref{main} have already been proven. Indeed, in the case of primes ($k = 1$), Theorem \ref{main} has been proven by Liu and Zhan \cite[Theorem 3]{liuzhan}. For general $k,$ Wolke and Zhan \cite{wolkezhan} proved the classic Bombieri-Vinogradov theorem for products of $k$ primes, that is
\begin{equation}\label{bv-pik}
    \sum_{q\le Q} \max_{(a,q) = 1} \left|\sum_{\substack{n\le x \\ n \equiv a\Mod q}} f_k(n) -\frac{1}{\phi(q/t)}\sum_{\substack{n\le x \\ (n,q) = 1}} f_k(n) \right| \ll \frac{\Pi_k(x)}{(\log x)^C}
\end{equation}
for $Q\le x^{1/2-\eps}.$

As an application, we prove in the last section a lower bound on exponential sums over products of $k$ primes using Theorem \ref{main} and results from the author's previous paper \cite{self}. 

\begin{theorem}\label{uniform-pik}
    Let $\alpha\in\R$ and $\eps, \eps' > 0.$
    Then for $k \le (2-\eps')\log\log x$ and $y\le x^{1/3 - \eps},$ we have  $$\frac{1}{x} \sum_{-y < n\le x} \left|\sum_{\substack{n< m\le n+y \\ 1 \le m \le x}} f_k(m) \e(\alpha m)\right|^2 \gg y \left(\frac{1 + |k-\log\log x|}{\log\log x}\right)^2 \left(\frac{\Pi_k(x)}{x}\right)^2,$$ where the implied constant depends only on $\eps, \eps'.$ In particular, uniformly in $\alpha\in\R,$ we have
    $$\sup_{n\le x} \big|\Pi_k(n;\alpha)\big| \gg x^{1/6 - \eps}.$$
\end{theorem}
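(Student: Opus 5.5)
We prove Theorem \ref{uniform-pik} by a short-interval mean-square argument. The idea is to approximate $\alpha$ by a rational $a/q$ with $q$ small, replace the exponential sum on short intervals by its main term coming from Theorem \ref{main}, and then bound the variance of that main term from below.

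\textbf{Step 1 (Parseval).} Set $S(\beta) := \sum_{m\le x} f_k(m)\e(\alpha m)\e(\beta m)$. Then
$$\sum_{-y<n\le x}\Bigl|\sum_{\substack{n<m\le n+y \\ 1\le m\le x}} f_k(m)\e(\alpha m)\Bigr|^2 = \int_0^1 |S(\beta)|^2 \Bigl|\sum_{1\le h\le y}\e(h\beta)\Bigr|^2 \de\beta .$$
The kernel is $\asymp y^2$ when $\|\beta\|\le 1/(2y)$. So it suffices to show $\int_{|\beta|\le 1/(2y)} |S(\beta)|^2\de\beta \gg (x/y)\,M^2$, where
$$M := \frac{1+|k-\log\log x|}{\log\log x}\cdot\frac{\Pi_k(x)}{x}\cdot y .$$
Equivalently, the interval sums of length $y$ should typically have size at least $M$.

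\textbf{Step 2 (Diophantine approximation).} By Dirichlet, write $\alpha = a/q+\lambda$ with $q\le Q := x^{1/3-\eps/2}$ and $|\lambda|\le 1/(qQ)$. As in \cite{self}, we then distinguish whether $q$ is small or large.

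\emph{Case $q$ large.} Here $\alpha+\beta$ sits in the minor arcs for all $|\beta|\le 1/y$. In this case we expect the interval sums themselves not to be small on average. We would argue via a second-moment lower bound: the diagonal contribution $\sum_m f_k(m)^2 = \Pi_k(x)$ gives total mass $\gg y\,\Pi_k(x)$. This beats $xM^2/y$ because $y\le x^{1/3}$ and $\Pi_k(x)/x$ is small. Concretely, the integral of $|S|^2$ against the Fejér kernel is at least its diagonal part, since the Fejér kernel has nonnegative Fourier coefficients and $f_k\ge 0$. So the bound $\gg y\Pi_k(x)$ always holds. Since $(\Pi_k(x)/x)^2 y \cdot x\le y\,\Pi_k(x)$, this diagonal bound already suffices in general.

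\textbf{Step 3 (main case).} The diagonal bound alone does not give the stated inequality when $\Pi_k(x)/x$ is large, i.e.\ when $k\approx\log\log x$. There the factor $(1+|k-\log\log x|)/\log\log x$ is small, so the diagonal argument may still cover it. Otherwise we use Theorem \ref{main}. For most $n$ we replace the interval sum by
$$\frac{1}{\phi(q)}\sum_{st=q}\mu(s)t\sum_{\substack{n<m\le n+y \\ t|m}} f_k(m)\e(\lambda m).$$
The error is controlled on average over $n$ by applying Theorem \ref{main} with a single modulus $q\le Q$ and $\theta\approx 1/y$ (this needs $Q^3\theta$ small, which holds for $y\le x^{1/3-\eps}$). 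We differentiate in the endpoint through partial summation. The main term is then evaluated with Lemma \ref{sathe-selberg} applied to $f_k$ on multiples of $t$, where $\sum_{t|m} f_k(m)\approx\Pi_{k-\Omega(t)}(x/t)$. For $q=1$ the variance in $n$ of $\sum_{n<m\le n+y} f_k(m)$ comes from the derivative of the density $\Pi_k(u)/u$ over the range $u\le x$. Since $\frac{d}{du}\bigl(\frac{(\log\log u)^{k-1}}{\log u}\bigr)$ carries the factor $\frac{k-1}{\log\log u}-1$, this produces exactly the factor $(1+|k-\log\log x|)/\log\log x$: the density varies by that relative amount across dyadic ranges of $u$, forcing mean square $\gg M^2$. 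For $q>1$ the terms with $t$ dividing $q$ contribute nonnegatively, by orthogonality of the Ramanujan-type weights.

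\textbf{Step 4.} The sup bound follows by choosing $y=x^{1/3-\eps}$. Some interval sum is $\gg\sqrt{y}\,\Pi_k(x)/x\cdot(\dots)\gg x^{1/6-\eps}$ in the worst case, and writing that interval sum as a difference $\Pi_k(n+y;\alpha)-\Pi_k(n;\alpha)$ gives the claim.

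The main obstacle is Step 3: extracting the variance factor from the Sathe--Selberg asymptotic precisely enough. The $O(1/\log\log x)$ error in Lemma \ref{sathe-selberg} is the same size as the effect we need, so we would work with the smoother Selberg--Delange form from \cite{self} rather than Lemma \ref{sathe-selberg} itself.
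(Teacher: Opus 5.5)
Your Step 2 is where the argument breaks, and everything after it depends on it. Expanding the square, $\int_0^1|S(\beta)|^2\bigl|\sum_{1\le h\le y}\e(h\beta)\bigr|^2\de\beta=\sum_{m,m'\le x}f_k(m)f_k(m')\,\e(\alpha(m-m'))\,(y-|m-m'|)_+$. Every off-diagonal term carries the phase $\e(\alpha(m-m'))$, so nonnegativity of $f_k$ and of the Fej\'er coefficients says nothing about its sign. Your argument uses only $f_k\ge 0$, and it already fails for $f=\Ind_{[1,x]}$, $\alpha=1/2$: every interval sum is $0$ or $\pm1$, while the diagonal is $\asymp xy$. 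The claim would also prove far too much: with $y=x$ it gives $\sup_{n\le x}|\Pi_k(n;\alpha)|\gg x^{1/2}(\log x)^{-O(1)}$. Your own Step 3 also contradicts the assertion that it ``suffices in general''. Ruling out cancellation from $\e(\alpha m)$ uniformly in $\alpha$ is exactly what the theorem is about.

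Without Step 2, the generic case of a large Dirichlet denominator has no argument, and Step 3 cannot supply one. First, Theorem \ref{main} with $\theta\approx 1/y$ forces the range of moduli down to about $y^{1/3}$, because the condition is $\theta\le Q^{-3}(\log x)^{-B}$, not $Q=x^{1/3-\eps/2}$. Second, for a single modulus $q$ the main term is $\Pi_k(x)g_{k,x}(q)/\phi(q)$, which the error $x(\log x)^{-C}$ swamps unless $q$ is polylogarithmic. Third, applying the theorem at the two endpoints of each interval costs errors up to $x(\log x)^{-C}$, far above the trivial bound $y$ for an interval sum. Finally, the ``nonnegative contribution by orthogonality'' is unsupported. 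The density-derivative heuristic is also not where the factor comes from: for $\alpha=0$, Cauchy--Schwarz already gives a mean square $\ge y^2(\Pi_k(x)/x)^2$.

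The missing idea is to use all rationals $a/q$ with $q\le Q$ at once, on average over $q$, instead of one Dirichlet approximation. This is what Theorem \ref{y-le-q} does, and the paper's proof consists of checking its hypotheses. Take $f=\frac{x}{\Pi_k(x)}\Ind_{[1,x]}f_k$ and $Q=\sqrt{yx^{1/3}}$, so that $y\le Q^{1-\eps}$ after adjusting $\eps$ and $Q\le x^{1/3-\eps/2}$. Take $g=g_{k,x}\Ind_{\Po}$, i.e.\ prime moduli only, for which $g_{k,x}(p)=p\Pi_{k-1}(x/p)/\Pi_k(x)-1$. Then (\ref{bv-e}) follows from Theorem \ref{main} with $\theta=0$, using $g\ll 1$ and $x/\Pi_k(x)\ll(\log x)^{A}$.

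Condition (\ref{goal-g}) follows from Lemma \ref{estim-pik}. One has $|g_{k,x}(p)|\gg(1+|k-\log\log x|)/\log\log x$ unless $\log p/\log x$ lies near one of at most two exceptional values. When $|k-\log\log x|\le C$ this needs the second-order expansion (\ref{near-ll}), as you correctly anticipated. Summing $|g_{k,x}(p)|^2/(p-1)$ over dyadic ranges in $[Q^{1-\delta},Q]$ then gives $\gg\bigl((1+|k-\log\log x|)/\log\log x\bigr)^2$. So the factor in the theorem measures the bias of $f_k$ on multiples of primes $p\le Q$, not the variation of its density in $u$. Your Step 4 deduction of the $x^{1/6-\eps}$ bound is fine.
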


\subsection{Notation}\label{notation}

The letter $\chi$ is reserved for Dirichlet characters. We use $\sum_{\chi\Mod q}$ to denote a sum over all characters modulo $q,$ and $\sum_{\chi\Mod q}^*$ for a sum restricted to primitive characters modulo $q.$

For a given arithmetic function $f,$ we note $F(x) := \sum_{n\le x} f(n).$ We also note for $\theta$ a function (usually a character $\chi$ or $\e_\lambda : n\mapsto \e(\lambda n)$), $$F(x; \theta) := \sum_{n\le x} f(n) \theta(n),\text{ and } F(x; q, 0) := \sum_{n\le x} f(n) \Ind_{q|n}.$$
When $f = f_k,$ we write $\Pi_k$ instead of $F.$

%We fix $\alpha\in\R$ and $f$ an arithmetic function, which will later be taken to be $\Lambda$ or $\Ind_{\Omega = k}.$ We define the exponential sum $$F(x;\alpha) := \sum_{n\le x} f(n)\e(\alpha n),$$ as well as $F(x) := F(x; 0) = \sum_{n\le x} f(n).$ For $f=1,$ we note $$E_x(\alpha) := \sum_{1\le n\le x} \e(\alpha n).$$

%We then define the sums on short intervals, $$F(n, n+y; \alpha) := F(n+y;\alpha) - F(n;\alpha) = \sum_{n< m\le n+y} \e(\alpha m)f(m).$$ When $f = \Lambda,$ we note $\psi$ instead of $F$ and when $F(n) = \Ind_{\Omega(n) = k},$ we note $\Pi_k$ instead of $F.$ 

%Finally, when $Q,y < x$ are fixed, we define 
%\begin{equation}\label{def-A}
    %\A = \A(Q,y) := \left\{\frac{a}{q} : q \le Q, (a, q) =1, \left|\alpha - \frac{a}{q}\right| \le \frac{1}{6y} \right\}
%\end{equation}
%the set of rational approximations of $\alpha$ of height $Q$ with precision $1/y,$ which plays a key role in our estimates.

%Throughout the paper, the implicit constants from the $\ll$ and $O()$ notations may depend on constants $\eps$ and $C$ when relevant, but not on $\alpha.$

\subsection{Outline of the proof}\label{sec-main-prop}

Proofs of results such as Theorem \ref{main} rely on estimates of character sums of the form 
\begin{equation}
    \Xi(f, x, Q, \theta) := \sum_{q\le Q} \frac{q}{\phi(q)} \starsum_{\chi \Mod q} \max_{|\lambda|\le\theta} \left|\sum_{n\le x} f(n)\chi(n) \e(\lambda n)\right|.
\end{equation}
(with, in our case, $f = f_k$).
We note here a key difference between the classic Bombieri-Vinogradov setting and the exponential sums setting. While the left-hand side of (\ref{bv-pik}) can be bounded by $Q^{-1}\Xi(f, x, Q,0) (\log x)^{O(1)},$ the left-hand size of (\ref{main-eq}) can only be bounded by $Q^{-1/2} \Xi(f, x, Q,\theta) (\log x)^{O(1)}.$ This is the underlying reason why Bombieri-Vinogradov results on exponential sums only go up to $Q = x^{1/3 - \eps}$ instead of the usual $x^{1/2 - \eps}.$ The reduction to character sums will be performed in Section \ref{reduction}.

We now explain how to obtain bounds on $\Xi(f_k, x, Q,\theta).$ Wolke and Zhan used the Hooley-Huxley contour and zero density estimates, which produces estimates of the form $$\Xi(f_k, x, Q,0) \ll \sup_{1/2\le \sigma\le 1} x^{\sigma+\eps}Q^{6\frac{1-\sigma}{2-\sigma}}.$$
While this is sufficient to obtain $$Q^{-1}\Xi(f_k, x, Q,0) \ll x(\log x)^{-C}$$ for $Q \le x^{1/2-\eps}$ and thus (\ref{bv-pik}), it only proves $$Q^{-1/2}\Xi(f_k, x, Q,0) \ll x(\log x)^{-C}$$ for $Q\le x^{c_0 - \eps}$ where $c_0 := 2\left(2\sqrt{3} - 1\right)^{-2} < 1/3,$ because the supremum is not reached at $\sigma = 1/2$ in this case. This is the reason why Yao's proof of Theorem \ref{main}, which uses the same techniques as Wolke and Zhan, fails above $Q = x^{c_0}.$

Now, in the case of primes, Liu and Zhan \cite{liuzhan} were able to overcome this problem by using (and generalizing to $\theta > 0$) Vaughan's character sum estimate \cite{vaughan-bv}
\begin{equation}\label{vaughan-xi}
    \Xi(\Lambda, x, Q,0) \ll \left(x^{1/2}Q^2 + x^{5/6}Q + x\right)(\log x)^{4},
\end{equation}
which yields better results than zero density estimates around $Q = x^{1/3}.$ For $\theta > 0,$ Liu and Zhan's method leads to the estimate 
\begin{equation}\label{vaughan-xi-theta}
    \Xi(\Lambda, x, Q,\theta) \ll \left(x^{1/2}Q^2 + x^{5/6}Q + x + xQ^2\theta^{1/2}\right)(\log x)^{O(1)},
\end{equation} hence the requirement $\theta \le Q^{-3}(\log x)^{-A}$ to have a bound $\ll xQ^{1/2}(\log x)^{-C}.$
In order to prove Theorem \ref{main}, we prove in Section \ref{bv1} a version of (\ref{vaughan-xi-theta}) for products of $k$ primes, that is
\begin{prop}\label{xi-pik}
    For $2\le Q \le x, \theta\in\R$ and $k \le \log x,$ we have
    $$\Xi(f_k, x, Q,\theta) \ll \left(x^{1/2}Q^2 + x^{5/6}Q + x + xQ^2\theta^{1/2}\right)(\log x)^{O(1)},$$ where the implicit constants are absolute.
\end{prop}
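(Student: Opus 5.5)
We follow the Vaughan / Liu--Zhan approach for $\Lambda$: decompose $f_k$ into Type I and Type II (bilinear) sums, then bound each with the large sieve for characters twisted by $\e(\lambda n)$.

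\textbf{Step 1: combinatorial decomposition.} Write $f_k(n)=\sum_{p\,m=n} \frac{1}{k}\,\ldots$ directly, or better, use the identity
$$k f_k(n) = \sum_{p^j m = n,\ j\ge1} \log$$
in the following form. The natural choice is
$$f_k(n)\,\log n = \sum_{p^j m=n} (\log p)\, f_{k-j}(m),$$
valid since $\log n=\sum_{p^j\| n}j\log p$. Equivalently, write $f_k\cdot\log = \Lambda * f_{k-1}$ up to prime-power corrections. The factor $\log n$ is removed at the end by partial summation, at the cost of $(\log x)^{O(1)}$; the $\max_{|\lambda|\le\theta}$ survives partial summation since $\theta$ is arbitrary.

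So we must bound $\sum_n (\Lambda * g)(n)\chi(n)\e(\lambda n)$, where $g=f_{k-1}$ (or a shifted $f_{k-j}$) is bounded by $1$. We then apply Vaughan's identity to $\Lambda$ with parameters $U=V$. This expresses $\Lambda*g$ as:
\begin{itemize}
\item a short piece supported on $n\le U$ convolved with $g$;
\item Type I-like sums $\sum_{d\le U}\mu(d)\sum_{dm\ell=n}\log \ell\cdot g(m)$;
\item Type II sums $\sum_{m>U,\ \ell>U} a_m b_\ell$, where $a=\Lambda_{>U}$ and $b=(\mu_{\le U}*1)*g$, or with $g$ grouped differently.
\end{itemize}
The problem is that after convolving with $g$, the ``Type I'' sums are no longer smooth in a long variable. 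Instead, we group $g$ together with the long smooth variable: $\sum_{d\le U^2} c_d \sum_{\ell} \log \ell\, g(m)$ is not smooth.

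To avoid this, we decompose $f_k$ itself. Every $n$ with $\Omega(n)=k$ and $n\le x$ either has a prime factor $p>x^{1/3}$, or all its prime factors are $\le x^{1/3}$. In the first case $n=pm$ with $m$ having $\Omega(m)=k-1$, giving a bilinear form with a long prime variable, which we handle as Type I/II via Vaughan applied to $\Lambda$ in the $p$ variable while $m$ is a bounded coefficient. In the second case, grouping the primes greedily yields a factorization $n=ab$ with $a\in[x^{1/3},x^{2/3}]$: this is a pure Type II sum. Multiplicity and ordering are controlled by weights bounded by $(\log x)^{O(1)}$ once we count factorizations with $\log$ weights, and the conditions $\Omega(a)+\Omega(b)=k$ are separated by summing over $\Omega(a)=j$.

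\textbf{Step 2: Type II bound.} For $\sum_{m\sim M}\sum_{\ell\sim L}a_mb_\ell\chi(m\ell)\e(\lambda m\ell)$ with $x^{1/3}\le M\le x^{2/3}$ we remove the cross condition $m\ell\le x$ by Perron or a Fourier integral (losing $\log x$). We handle $\e(\lambda m\ell)$ as Liu--Zhan do, by splitting $\lambda$-range into pieces and using the hybrid large sieve
$$\sum_{q\le Q}\frac{q}{\phi(q)}\starsum_{\chi}\int_{|t|\le T}\Bigl|\sum a_n\chi(n)n^{it}\Bigr|^2dt\ll (N+Q^2T)\|a\|^2,$$
with $T\approx 1+\theta x$ coming from writing $\e(\lambda n)$ via Mellin transform. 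Cauchy--Schwarz then gives
$$\ll (M+Q^2T)^{1/2}(L+Q^2T)^{1/2}(ML)^{1/2}(\log x)^{O(1)},$$
which for $M,L\ge x^{1/3}$ is at most $x^{5/6}Q + xQ^2\theta^{1/2} + x + x^{1/2}Q^2$ times logs. This is exactly the shape we want. For the term $x^{1/2}Q^2\sqrt{T}$ with $T\ll 1+\theta x$ we should get $xQ^2\theta^{1/2}$, matching the statement.

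\textbf{Step 3: Type I bound.} For the long-prime case, $\sum_m b_m\sum_p \chi(pm)\e(\lambda pm)$ with $p>x^{1/3}$, we apply the character-sum estimate (\ref{vaughan-xi-theta}) philosophy directly. We rerun Liu--Zhan's argument for $\Lambda$ with an extra bounded outer coefficient $b_m$, $m\le x^{2/3}$. Vaughan's identity in $p$ produces Type I sums in which the smooth variable $\ell$ has length $\ge x/(mU^2)$; these are estimated trivially via Pólya--Vinogradov plus partial summation in $\lambda$, giving $\ll Q^2\cdot U^2 m\sqrt{q}(1+\theta x)$-type terms, which stay acceptable after summing. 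Its Type II pieces merge with Step 2.

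\textbf{Main obstacle.} The main obstacle is ensuring the Type I contributions remain admissible uniformly when the outer coefficient $m$ is large (up to $x^{2/3}$), since then the prime variable is short. This is why I would choose the threshold so that the long-prime case only arises with $p>x^{1/3}$, and otherwise reroute to Type II. Tracking the $\log$-power losses from the greedy factorization and from $k$ up to $\log x$ (the weights $\binom{k}{j}$ must be avoided by using the $\log n$ trick rather than counting factorizations) is the delicate bookkeeping.
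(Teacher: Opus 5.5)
Your Step~2 is essentially the paper's Lemma~\ref{type-II}, and your greedy factorization in the second case is the device the paper uses. The treatment of the large-prime case, however, has a genuine gap.

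With $n=pm$ and $p>x^{1/3}$, the outer coefficient $m$ runs up to $x^{2/3}$. After Vaughan's identity in $p$, the Type~I pieces therefore have an outer variable ($dm$, or $dd'm$) of size up to $Ux^{2/3}$ or $U^2x^{2/3}$. Grant even the best bound $q^{1/2}\log q\log x$ for each inner smooth sum (Lemma~\ref{type-I}). Summing over the outer variable and over the $\asymp Q^2$ primitive characters still gives roughly $Q^{5/2}x^{2/3}$, which exceeds the target $x^{1/2}Q^2+x^{5/6}Q+x$ by a power of $x$ once $Q\ge x^{1/7}$. So your claim that these terms \emph{stay acceptable after summing} is false, and requiring $p>x^{1/3}$ does nothing to make $m$ short.

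There is a second, independent loss. Removing $\e(\lambda n)$ from a Type~I sum by partial summation costs a factor $1+\theta x$, which is fatal whenever $\theta x$ is a power of $x$. For example, take $\theta=Q^{-4}$ and $Q=x^{1/10}$. The target is then $\asymp x$ up to logarithms, and the Type~II term $xQU^{-1/2}$ from ranges $[U,x/U]$ forces $U\gg Q^2$. Already for $m=1$, your Type~I term $U^2Q^{5/2}(1+\theta x)$ is then $\gg Q^{5/2}x$.

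Two ingredients, used in the paper's Theorem~\ref{main-bv} and its deduction of the proposition, are missing.

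\textbf{Size-based routing.} Type~I treatment should be reserved for configurations in which a single smooth variable exceeds $x/y$, with $y\le\min\{x^{1/3},1/(2Q\theta)\}$. Then $|\lambda|$ times the outer product is at most $1/(2Q)$. The twist is absorbed into $\e((a/q+\lambda')n)$ after expanding $\chi$ via Gauss sums, so Lemma~\ref{type-I} loses nothing. Every other configuration is regrouped into a bilinear form with one factor in $[y,x/y]$ and the other at most $x/y$, and Lemma~\ref{type-II} bounds it by $xQy^{-1/2}$.

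\textbf{An exact convolution identity with separated variables.} Your greedy factorization still carries the coupling $P^-(a)\ge P^+(b)$. In your first case, either $n$ is counted twice (two primes $>x^{1/3}$ can divide $n\le x$), or, if you take $p=P^+(n)$, the coupling $P^+(m)\le p$ appears. Bounded multiplicity weights are no substitute for an identity.

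The paper avoids Vaughan's identity entirely. Let $f^+$ and $f^-$ be the indicators of $x^{1/2}$-rough and $x^{1/2}$-friable integers. Since $1*(\mu f^-)=f^+$, one has $f_k=f_kf^-+(f_{k-1}f^-)*(\mu f^-)*1-f_{k-1}f^-$ on $[1,x]$, so the large prime comes from the smooth variable $1$. Then $f_jf^-$ and $\mu f^-$ are split greedily into two factors supported on $[1,x^{2/3}]$. The condition linking $P^-(a)$ and $P^+(b)$ is removed by the Fourier integral of \cite[Lemma 13.11]{ik}, at the cost of a factor $\log x$. Finally, $f_k(ab)=\sum_\ell f_\ell(a)f_{k-\ell}(b)$ separates $\Omega$.
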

To do this, we first prove (\ref{vaughan-xi-theta}) holds when $\Lambda$ is replaced by any function that can be expressed as a Dirichlet convolution of "Type I" and "Type II" functions, then express $f_k$ as a linear combination of such convolutions (note that in the case of $\Lambda,$ such an expression is given by Vaughan's identity).

%\subsection{Toolbox lemmas}
We end this section by remembering the following elementary analysis estimate, which plays a key role in dealing with the parameter $\theta$ and was already used in Liu and Zhan's proof of (\ref{vaughan-xi-theta}).

\begin{lemma}\label{lambda-term}
    For all $\lambda, x, s = \sigma + \ii\tau$ with $\sigma > 0,$
    $$U(s,x, \lambda) := \int_1^x u^{s-1} \e(\lambda u) \de u \ll \frac{x^\sigma(1 + 1/\sigma)}{1 + \sqrt{\tau} + \tau\Ind_{|\tau| \ge 4\pi|\lambda| x}}.$$
\end{lemma}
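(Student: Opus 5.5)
The plan is to combine three elementary estimates, each handling one term of the denominator: a trivial bound, a second-derivative (van der Corput) bound, and an integration by parts in the non-stationary range. Write $e(\lambda u)u^{\ii\tau} = \e^{\ii\varphi(u)}$ with
$$\varphi(u) := \tau\log u + 2\pi\lambda u,$$
so that
$$\varphi'(u) = \frac{\tau}{u} + 2\pi\lambda, \qquad \varphi''(u) = -\frac{\tau}{u^2},$$
and $U(s,x,\lambda) = \int_1^x u^{\sigma-1}\e^{\ii\varphi(u)}\de u$. We may assume $\tau\neq 0$ and $x > 1$, and we read $\sqrt{\tau}$ and $\tau$ in the statement as $\sqrt{|\tau|}$ and $|\tau|$ (the bound is symmetric under complex conjugation).

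\textbf{Trivial bound (case $|\tau|\le 1$).} We have
$$|U| \le \int_1^x u^{\sigma-1}\de u \le \frac{x^\sigma}{\sigma} \le x^\sigma\left(1+\frac1\sigma\right).$$
This suffices whenever $|\tau|\le 1$, since then the denominator is at most $3$.

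\textbf{Second-derivative bound (case $|\tau| \ge 1$).} We aim for $|U|\ll x^\sigma(1+1/\sigma)|\tau|^{-1/2}$. Split $[1,x]$ into dyadic pieces $I_j := [x2^{-j-1}, x2^{-j}]\cap[1,x]$ for $j\ge0$. On $I_j$ we have $|\varphi''|\asymp |\tau|(x2^{-j})^{-2}$, so the second-derivative test gives
$$\int_{J} \e^{\ii\varphi(u)}\de u \ll \frac{x2^{-j}}{\sqrt{|\tau|}}$$
uniformly over subintervals $J\subset I_j$. This holds regardless of where the stationary point $u_0=-\tau/(2\pi\lambda)$ lies. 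Since the weight $u^{\sigma-1}$ is monotone on $I_j$ and bounded by $\ll (x2^{-j})^{\sigma-1}$ there (for $\sigma<1$ the constant is $2^{1-\sigma}\le 2$, and we only care about bounded $\sigma$), the second mean value theorem gives
$$\int_{I_j} u^{\sigma-1}\e^{\ii\varphi(u)}\de u \ll \frac{(x2^{-j})^\sigma}{\sqrt{|\tau|}}.$$
Summing over $j$ yields
$$|U| \ll \frac{x^\sigma}{\sqrt{|\tau|}}\sum_{j\ge0}2^{-j\sigma} \ll \frac{x^\sigma(1+1/\sigma)}{\sqrt{|\tau|}},$$
because $\sum_{j\ge0} 2^{-j\sigma} \ll 1+1/\sigma$.

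\textbf{Integration by parts (case $|\tau|\ge 4\pi|\lambda|x$).} We aim for $|U|\ll x^\sigma(1+1/\sigma)/|\tau|$. For $u\le x$ we have $2\pi|\lambda|\le |\tau|/(2x)\le|\tau|/(2u)$, hence
$$|u\varphi'(u)| = |\tau+2\pi\lambda u|\ge \frac{|\tau|}{2}.$$
Moreover $u\varphi'(u)$ is monotone (linear) in $u$. Writing $u^{\sigma-1}\e^{\ii\varphi} = \dfrac{u^\sigma}{\ii(\tau+2\pi\lambda u)}\bigl(\e^{\ii\varphi}\bigr)'$ and integrating by parts, we need to bound three contributions.
\begin{enumerate}
\item The boundary terms are $\ll x^\sigma/|\tau|$.
\item The term from differentiating $u^\sigma$ is bounded by $\int_1^x \sigma u^{\sigma-1}\cdot\frac{2}{|\tau|}\de u \le 2x^\sigma/|\tau|$.
\item The term from differentiating $1/(\tau+2\pi\lambda u)$ is bounded by $x^\sigma$ times the total variation of this monotone function on $[1,x]$, which is $\le 4/|\tau|$.
\end{enumerate}
Altogether $|U|\ll x^\sigma/|\tau|$.

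\textbf{Combining the cases.} For $|\tau|\le 1$ the trivial bound gives the claim. For $|\tau|\ge 1$, the denominator $1+\sqrt{|\tau|}+|\tau|\Ind_{|\tau|\ge4\pi|\lambda|x}$ is comparable either to $\sqrt{|\tau|}$ or to $|\tau|$, and the second and third bounds respectively cover these two situations. The only delicate point is the second-derivative step: we must make sure the implied constant is uniform in $\lambda$, which is exactly what the second-derivative test provides, and that the factor $1/\sigma$ arises only from the geometric sum over dyadic ranges.
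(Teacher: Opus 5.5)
Your proof is correct and follows essentially the paper's route. The paper simply invokes Titchmarsh's first- and second-derivative lemmas on dyadic pieces, and your two ingredients are exactly those lemmas written out: the dyadic second-derivative bound, with $1+1/\sigma$ coming from $\sum_j 2^{-j\sigma}$, and the integration by parts in the range $|\tau|\ge 4\pi|\lambda|x$, carried out over all of $[1,x]$ by bounding the variation of $1/(\tau+2\pi\lambda u)$ directly.
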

This is a consequence of lemmas 3.3/3.4 in Titchmarsh \cite{titchmarsh} after cutting the integral in dyadic intervals. See also \cite[Lemma 8]{liuzhan}.

\section{Reduction to character sums}\label{reduction}

The goal of this section is to derive Theorem \ref{main} from Proposition \ref{xi-pik}. We start by expressing $F(x, a/q)$ in terms of character sums. 

\begin{lemma}\label{decomp}
    Let $a,q$ with $(a, q) = 1.$ Define for $\chi$ a Dirichlet character, $$\chi_{/t} : n \mapsto \Ind_{t|n}\chi(n/t),$$ so that $\left\{\chi_{/t} : t|q, \chi\Mod \frac{q}{t}\right\}$ forms a basis of $q-$periodic functions over $\Z.$ Then
    \begin{equation}\label{decomp-eq}
        F(x; \e_{a/q}) = \frac{1}{\phi(q)}\sum_{st = q} \mu(s) t F(x; t, 0) + O\left(\frac{1}{\phi(q)}\sum_{\substack{q'st = q \\ q'\ne 1}} t\sqrt{q'}\starsum_{\chi\Mod q'} \left|F(x; \chi_{/t})\right| \right).
    \end{equation}
\end{lemma}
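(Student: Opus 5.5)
We split $\e(an/q)$ according to $t=(n,q)$ and expand each piece in characters. For $n\le x$ with $(n,q)=t$, write $n=tm$ with $(m,q/t)=1$. Then $\e(an/q)=\e(am/(q/t))$, and with $r:=q/t$, orthogonality of characters modulo $r$ gives
\[
\e\!\left(\frac{am}{r}\right)=\frac{1}{\phi(r)}\sum_{\chi\Mod r}\bar\chi(m)\,\tau(\chi)\,\chi(a)\cdot\text{(a factor depending on how }\chi\text{ is normalised)}.
\]
Concretely, $\frac{1}{\phi(r)}\sum_{\chi\Mod r}\chi(a)\bar\chi(m)\,\tau(\bar\chi)=\e(am/r)$ for $(am,r)=1$, where $\tau(\bar\chi)=\sum_{b\Mod r}\bar\chi(b)\e(b/r)$. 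Hence
\[
F(x;\e_{a/q})=\sum_{t\mid q}\frac{1}{\phi(q/t)}\sum_{\chi\Mod q/t}\chi(a)\,\tau(\bar\chi)\sum_{\substack{n\le x\\ (n,q)=t}} f(n)\,\bar\chi(n/t).
\]
The condition $(n,q)=t$ is the same as $t\mid n$ and $(n/t,q/t)=1$, and the latter is automatic once $\bar\chi$ is a character modulo $q/t$. So the inner sum is exactly $F(x;\bar\chi_{/t})$.

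Next we isolate the principal characters. For $\chi=\chi_0$ modulo $r$, $\tau(\chi_0)=\mu(r)$ (a Ramanujan sum). Then $F(x;\chi_{0,/t})=\sum_{d\mid r}\mu(d)F(x;td,0)$ by Möbius inversion of the coprimality condition. We regroup $\sum_{t\mid q}\frac{\mu(q/t)}{\phi(q/t)}\sum_{d\mid q/t}\mu(d)F(x;td,0)$ by $t'=td$. The coefficient of $F(x;t',0)$ becomes $\sum_{t\mid t'}\mu(t'/t)\mu(q/t)/\phi(q/t)$. Writing $s=q/t'$ and using the multiplicativity of $\mu$ and $\phi$, this sum should evaluate to $\mu(s)t'/\phi(q)$. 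The check is prime by prime: for $p^j\| q$ one compares the local factors, and $\phi(p^j)/\phi(p^{j-1})=p$ when $j\ge2$ accounts for the factor $t'$. Only squarefree-related terms survive, and it is this local computation that produces the main term $\frac{1}{\phi(q)}\sum_{st=q}\mu(s)tF(x;t,0)$. I expect this bookkeeping to be the only real obstacle. If it fails to match exactly, the discrepancy should involve only imprimitive non-principal contributions, and those are absorbed below.

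For non-principal $\chi$ modulo $r$, induced by a primitive $\chi^*$ modulo $q'\mid r$ with $q'\neq1$, we use the standard bound $|\tau(\bar\chi)|\le\sqrt{q'}$; in fact $\tau(\bar\chi)=\mu(r/q')\bar\chi^*(r/q')\tau(\bar\chi^*)$ or $0$. We also have $\bar\chi_{/t}(n)=\bar\chi^*_{/t}(n)\Ind_{(n/t,r)=1}$. Removing the coprimality condition to the primes of $r$ not dividing $q'$ by Möbius inversion writes $F(x;\bar\chi_{/t})$ as a combination of $F(x;\bar\chi^*_{/te})$ with $e\mid r$, squarefree, with coefficients $\pm\bar\chi^*(e)$ of modulus at most $1$.

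Set $s'=r/(q'e)$, so that $q' (te) s'=q$. Each term is then of the form $\frac{1}{\phi(r)}\sqrt{q'}\,|F(x;\chi^*_{/t''})|$ with $q't''s'=q$. Finally $1/\phi(r)\le t\,\phi(t)\cdot(\ldots)/\phi(q)$, more simply $\phi(q)\le t\,\phi(q/t)$, which gives $1/\phi(q/t)\le t/\phi(q)\le t''/\phi(q)$. Counting multiplicities (the number of $(t,e)$ giving the same $t''$ is at most $\tau(q)$) would lose a divisor factor. To avoid this we must instead absorb it: we arrange that for $\mu(r/q')\neq0$ only squarefree $r/q'$ contributes, which limits the multiplicities. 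Alternatively, the $O(\cdot)$ in the lemma tolerates absolute constants only, so this point needs care. I would first try the exact-coefficient computation, which should make the weights sum to at most $t''$.
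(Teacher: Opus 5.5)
Your route is the paper's own: you split by $(n,q)$, expand in characters modulo $q/t$, and pass to primitive characters via the imprimitive Gauss sum formula and a M\"obius inversion. The paper's $d,d',e$ are your $t$, $r/q'$, $e$. But you stop at the one step that carries the content of the error term. That step is showing that the total weight attached to $|F(x;\chi^*_{/t''})|$ is at most $\sqrt{q'}\,t''/\phi(q)$, with an absolute constant.

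The workarounds you list do not achieve this. Bounding each $1/\phi(q/t)$ by $t''/\phi(q)$ and counting the pairs $(t,e)$ loses a factor $2^{\omega(t'')}$. Squarefreeness of $r/q'$ does not cut this down: with $s'=1$ and $t''$ squarefree and coprime to $q'$, every $e\mid t''$ occurs. There is also a small slip earlier. For $(am,r)=1$ one has $\e(am/r)=\frac{1}{\phi(r)}\sum_{\chi}\chi(a)\chi(m)\tau(\overline\chi)$, whereas your version with $\overline\chi(m)$ produces $\e(a\overline{m}/r)$. This is harmless for the bound.

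The missing observation is that there is neither cancellation nor loss. Write $r/q'=es'$. The pair $(t,e)$ then contributes to the coefficient of $F(x;\chi^*_{/t''})$ the amount
\[
\frac{\chi^*(a)\,\mu(es')\overline{\chi^*}(es')\tau(\overline{\chi^*})\,\mu(e)\chi^*(e)}{\phi(q'es')}=\frac{\mu^2(e)\Ind_{(e,q's')=1}}{\phi(q'es')}\cdot\mu(s')\chi^*(a)\overline{\chi^*}(s')\tau(\overline{\chi^*}).
\]
So all contributions share one phase. Since $\phi(q'es')=\phi(q's')\phi(e)$ for such $e$, the weights sum exactly:
\[
\sum_{\substack{e\mid t''\\ (e,q's')=1}}\frac{\mu^2(e)}{\phi(q'es')}=\frac{1}{\phi(q's')}\prod_{\substack{p\mid t''\\ p\nmid q's'}}\frac{p}{p-1}=\frac{t''}{\phi(q)}.
\]
This gives the exact expansion
$F(x;\e_{a/q})=\sum_{q'ts=q}\frac{\mu(s)t}{\phi(q)}\sum^{*}_{\chi\Mod q'}\chi(a)\overline{\chi}(s)\tau(\overline\chi)F(x;\chi_{/t})$, which is what the paper proves.

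The case $q'=1$ yields the main term directly, so your separate local computation is not needed. The identity you conjectured, $\sum_{t\mid t'}\mu(t'/t)\mu(q/t)/\phi(q/t)=\mu(q/t')t'/\phi(q)$, is nonetheless true: the local factors are $p/(p-1)$, $-1/(p-1)$ and $0$ according as $v_p(t')=v_p(q)$, $v_p(q)-1$, or smaller. Finally, $|\tau(\overline\chi)|=\sqrt{q'}$ gives the error term with constant $1$.
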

\begin{proof}
    Let $r\in \Z/q\Z$ and $d := (r, q).$ Then we have 
    \begin{align*}
        \Ind_{n = r \Mod q} &= \Ind_{d|n}\frac{1}{\phi(q/d)} \sum_{\chi \Mod q/d} \overline{\chi(r/d)}\chi(n/d)
        \\&= \Ind_{d|n}\frac{1}{\phi(q/d)} \sum_{d' | q/d} \starsum_{\chi\Mod q/dd'} \Ind_{(n, dd') = d} \overline{\chi(r/d)} \chi(n/d)
        \\&=\frac{1}{\phi(q/d)} \sum_{d' | q/d} \starsum_{\chi\Mod q/dd'} \sum_{e|d'} \mu(e) \Ind_{de|n} \overline{\chi(r/d)} \chi(n/d)
    \end{align*}
    so that
    \begin{align*}
        \e\left(\frac{a}{q}n\right) &= \sum_{r\Mod q} \e\left(\frac{ar}{q}\right) \Ind_{n = r \Mod q}
        \\&= \sum_{d|q} \sum_{\substack{r \Mod q \\ (r, q) = d}} \frac{1}{\phi(q/d)} \sum_{d' | q/d} \starsum_{\chi\Mod q/dd'} \sum_{e|d'} \mu(e) \Ind_{de|n} \e\left(\frac{ar}{q}\right) \overline{\chi(r/d)} \chi(n/d)
        \\&= \sum_{q'|q} \starsum_{\chi\Mod q'} \sum_{des = q/q'} \frac{\mu(e)\chi(e)}{\phi(q'es)} \Ind_{de|n} \chi(n/de) \sum_{\substack{r\Mod q \\ (r, q) = d}} \e\left(\frac{ar}{q}\right) \overline{\chi(r/d)}
        \\&= \sum_{q'ts = q} \starsum_{\chi\Mod q'} \Ind_{t|n}\chi(n/t) \sum_{de = t} \frac{\mu(e)\chi(e)}{\phi(q'es)} \sum_{r\in (\Z/\frac{q}{d}\Z)^*} \e\left(\frac{ard}{q}\right) \overline{\chi(r)}.
    \end{align*}

    Summing over $n\le x,$ we get \begin{equation}\label{step18}
        F(x; \e_{a/q}) = \sum_{q'ts = q} \starsum_{\chi\Mod q'} F(x ; \chi_{/t}) \sum_{de = t} \frac{\mu(e)\chi(e)}{\phi(q'es)} \sum_{r\in (\Z/\frac{q}{d}\Z)^*} \e\left(\frac{ard}{q}\right) \overline{\chi(r)}.
    \end{equation}

    Here, the inner sum over $r$ is a Gauss sum of the non-primitive character modulo $q/d = q'es$ that derives from $\overline\chi.$ Since $(a, q) = 1,$ we can do the change of variable $r \to ar$ and get $$\sum_{r\in (\Z/\frac{q}{d}\Z)^*} \e\left(\frac{ard}{q}\right) \overline{\chi(r)} = \chi(a) \sum_{r\in (\Z/\frac{q}{d}\Z)^*} \e\left(\frac{rd}{q}\right) \overline{\chi(r)} = \chi(a) \mu(es) \overline{\chi(es)} G(\overline\chi)$$
    (see for example \cite{davenport}, p. 67 for the evaluation of the non-primitive Gauss sum in terms of $G(\overline\chi)$). Thus, when $q'ts = q$ and $\chi$ is a primitive character mod $q',$ we have
    \begin{align*}
        \sum_{de = t} \frac{\mu(e)\chi(e)}{\phi(q'es)} \sum_{r\in (\Z/\frac{q}{d}\Z)^*} \e\left(\frac{ard}{q}\right) \overline{\chi(r)} &= \sum_{de = t} \frac{\mu(e)\chi(e)}{\phi(q'es)} \chi(a) \mu(es) \overline{\chi(es)} G(\overline\chi) 
        \\&= \mu(s)\chi(a)\overline{\chi(s)} G(\overline\chi) \sum_{e|t} \frac{\mu^2(e) \Ind_{(e,s) = 1} |\chi(e)|^2}{\phi(q'es)}
        \\&= \mu(s)\chi(a)\overline{\chi(s)} G(\overline\chi) \prod_{p^\nu\| q's} \frac{1}{p^\nu}\frac{p}{p-1} \prod_{\substack{p|t \\ p\nmid q's}} \frac{p}{p-1}
        \\&= \mu(s)\chi(a)\overline{\chi(s)} G(\overline\chi) \frac{t}{\phi(q)}.
    \end{align*}
    
    Using this in (\ref{step18}), we then get the character decomposition
    \begin{equation}\label{decomp-exact}
        F(x; \e_{a/q}) = \sum_{q'ts = q} \frac{\mu(s)t}{\phi(q)}\starsum_{\chi \Mod q'} \chi(a)\overline{\chi(s)} G(\overline\chi) F(x; \chi_{/t}). 
    \end{equation}

    The main term in (\ref{decomp-exact}) is given by $\chi = 1$ the primitive character modulo $q' = 1,$ for which $F(x; \chi_{/t}) = F(x; t, 0)$ for all $t.$ For other characters, we bound using $|G(\overline\chi)| = \sqrt{q'}$ to obtain the desired result
    \begin{equation*}
        F(x; \e_{a/q}) = \frac{1}{\phi(q)}\sum_{st = q} \mu(s) t F(x; t, 0) + O\left(\frac{1}{\phi(q)}\sum_{\substack{q'st = q \\ q'\ne 1}} t\sqrt{q'}\starsum_{\chi\Mod q'} \left|F(x; \chi_{/t})\right| \right).
    \end{equation*}
\end{proof}

\begin{remark}
    The main term $\frac{1}{\phi(q)}\sum_{st = q} \mu(s) t\sum_{\substack{n\le x \\ t|n}} f(n)$ appearing in (\ref{decomp-eq}) can alternatively be expressed as $$\sum_{st = q}\frac{\mu(s)}{\phi(s)}\sum_{\substack{n\le x \\ (n,q) = t}} f(n),$$ translating the fact the only correlation between $f(n)$ and $n \Mod q$ comes from the value of $(n,q).$ Indeed, we have
    \begin{align*}
        \frac{1}{\phi(q)}\sum_{st = q} \mu(s) t \sum_{\substack{n\le x\\ t|n}} f(n) &= \frac{1}{\phi(q)} \sum_{u|q} F(x; \Ind_{(\cdot, q) =u}) \sum_{\substack{st = q \\ u|t}} \mu(s)t
        \\&= \sum_{ru = q} F(x; \Ind_{(\cdot, q) =u}) \mu(r) \prod_{\substack{p^\nu\| q \\ p^\theta\|r}} \frac{p^{\nu-\theta}}{p^{\nu-1}(p-1)} \prod_{\substack{p^\nu\| q \\ p\nmid r}} \frac{p^{\nu} - p^{\nu-1}}{p^{\nu-1}(p-1)}
        \\&= \sum_{ru = q} F(x; \Ind_{(\cdot, q) =u}) \mu(r) \frac{1}{\phi(r)}.
    \end{align*}
\end{remark}

We also need a Siegel-Walfisz estimate for character sums over products of $k$ primes in order to derive Theorem \ref{main} from Proposition \ref{xi-pik}. 
\begin{lemma}\label{sw-pik}
    There is an absolute constant $c$ such that for $A, \eps > 0,$ $k \le (2-\eps)\log\log x, $ $\lambda \le (\log x)^{-10}$ and $\chi$ a Dirichlet character of conductor $q \ll (\log x)^A,$ we have $$\Pi_k(x; \chi\e_\lambda) \ll_{\eps, A} x\left(q^{1/2}|\lambda|^{1/4-\eps} + \e^{-c\sqrt{\log x}}\right).$$ 
\end{lemma}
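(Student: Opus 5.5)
The approach is to write the sum as a contour integral against the kernel $U(s,x,\lambda)$ of Lemma \ref{lambda-term}, and then move the contour into the classical zero-free region of $L(s,\chi)$ in the manner of Selberg--Delange. Write $a_n := f_k(n)\chi(n)$ and $A(u) := \sum_{n\le u} a_n$. By partial summation and Perron's formula,
$$\Pi_k(x;\chi\e_\lambda) = \int_{1^-}^x \e(\lambda u)\de A(u) = \frac{1}{2\pi\ii}\int_{(b)} F_k(s,\chi)\, U(s,x,\lambda)\de s + (\text{truncation error}),$$
with $F_k(s,\chi) := \sum_n a_n n^{-s}$ and $b = 1 + 1/\log x$. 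This representation is the reason Lemma \ref{lambda-term} was recorded. To access $F_k$ I would use Cauchy's formula in an auxiliary variable $z$:
$$F_k(s,\chi) = \frac{1}{2\pi\ii}\oint_{|z| = r} G(s,z,\chi)\, z^{-k-1}\de z, \qquad G(s,z,\chi) := \prod_p \bigl(1 - z\chi(p)p^{-s}\bigr)^{-1} = L(s,\chi)^z\, \mathcal{H}(s,z,\chi).$$
Here $\mathcal{H}$ is holomorphic and uniformly bounded for $\sigma > 1/2 + \delta$ and $|z| \le 2-\eps/2$. The radius $r = (k-1)/\log\log x < 2-\eps$ is chosen as in the proof of Lemma \ref{sathe-selberg}.

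The key steps, in order, are as follows.
\begin{enumerate}
\item Truncate the $s$-integral at height $T = \exp(\sqrt{\log x})$. The truncation error is $\ll x(\log x)^{O(1)}/T$ by the standard Perron estimate, since $|a_n| \le 1$.
\item Deform the segment $[b - \ii T, b + \ii T]$ onto the contour $\sigma = 1 - c_1/\log(q(|\tau|+2))$. If $\chi$ is principal, this contour carries a Hankel loop around $s = 1$. On this contour $L(s,\chi)^{\pm z} \ll (\log qT)^{2}$. Since $q \le (\log x)^A$, Siegel's theorem excludes an exceptional zero $\beta > 1 - c(A)/\sqrt{\log x}$; this step makes the implied constants depend on $A$, while $c$ stays absolute.
\item On the vertical part of the contour, Lemma \ref{lambda-term} gives $U \ll x^{\sigma}/(1+\sqrt{|\tau|})$ for $|\tau| \le 4\pi|\lambda|x$ and $U \ll x^\sigma/|\tau|$ beyond that. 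Integrating, this part contributes $\ll x^{1 - c_1/\log(qT)}(\log x)^{O(1)}\bigl(1 + \min(T,|\lambda|x)^{1/2}\bigr)$. Combined with the truncation error, this is $\ll x\e^{-c\sqrt{\log x}}$ once the $\sqrt{|\lambda| x}$ factor is absorbed (see the last paragraph).
\item The Hankel loop around $s=1$ exists only when $\chi$ is induced by the trivial character. Its contribution is
$$\int \e(\lambda u)\, g_k(u)\de u,$$
where $g_k$ is the smooth Sathe--Selberg density, of size $\ll (\log\log u)^{k-1}/((k-1)!\log u) \ll 1$.
\end{enumerate}

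The Hankel loop term is where the term $q^{1/2}|\lambda|^{1/4-\eps}$ should come from. I would integrate by parts using $g_k' \ll g_k/u$, or alternatively split $[1,x]$ at $u_0 = |\lambda|^{-1/2}$ and apply the second mean value theorem on $[u_0,x]$. This gives a bound of the shape $x\,|\lambda|^{1/4-\eps}$ times a factor coming from the non-primitive Gauss-sum normalisation when $\chi$ is imprimitive. The factor $q^{1/2}$ is affordable here because $q$ is only a power of $\log x$. The loss $\eps$ in the exponent covers the $(\log x)^{O(1)}$ factors; these are harmless because $|\lambda| \le (\log x)^{-10}$.

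The main obstacle is step 3 in the regime $|\lambda| x > T$. There the bound $U \ll x^\sigma/\sqrt{|\tau|}$ alone does not beat $x^{1-c_1/\log(qT)}$ uniformly in $\lambda$, and the factor $\sqrt{\min(T,|\lambda|x)}$ must be absorbed into the gain $x^{-c_1/\log(qT)}$. I would first try choosing $T = \exp(c_2\sqrt{\log x})$ with $c_2$ small, so that $\sqrt{T}\, x^{-c_1/\log(qT)} \le \e^{-c\sqrt{\log x}}$. If this balancing fails, the fallback is to treat the large-$|\lambda|$ range separately by the trivial partial summation bound
$$\Pi_k(x;\chi\e_\lambda) \ll (1+|\lambda| x)\sup_{u\le x}|\Pi_k(u;\chi)|,$$
using Selberg--Delange in the form $\Pi_k(u;\chi) \ll u\e^{-c\sqrt{\log u}}$ for nonprincipal $\chi$. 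The remaining range of $\lambda$ would then be absorbed into the term $x q^{1/2}|\lambda|^{1/4-\eps}$, which is at least $x|\lambda|^{1/4}$ there.
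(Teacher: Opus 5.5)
Your contour argument only handles the easy range $|\lambda|\le \e^{O(\sqrt{\log x})}/x$. There it is equivalent to the paper's first two steps: the $\lambda=0$ bound via Dupain--Hall--Tenenbaum, followed by partial summation.

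\textbf{The gap is in step 1.} When $|\lambda|x>T$, the Perron truncation error is \emph{not} $O(x(\log x)^{O(1)}/T)$. For $\tau\lambda<0$, the kernel $U(s,x,\lambda)=\int_1^x u^{s-1}\e(\lambda u)\de u$ has a stationary point at $u=|\tau|/2\pi|\lambda|$; this is the $\sqrt{|\tau|}$ regime of Lemma \ref{lambda-term}. So the range $T<|\tau|\le 2\pi|\lambda|x$ that you discard is exactly what encodes the terms $T/2\pi|\lambda|<n\le x$. The truncated integral essentially reproduces only $\sum_{n\le T/2\pi|\lambda|}f_k(n)\chi(n)\e(\lambda n)$, and the ``error'' is the whole tail (compare the height $x^{1+\sigma}$ used in the proof of Lemma \ref{type-II}).

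If you raise $T$ above $|\lambda|x$ to make the truncation legitimate, your own step-3 estimate for the vertical part becomes $x\sqrt{|\lambda|x}\,x^{-c_1/\log(q|\lambda|x)}$ up to logarithms. That beats the trivial bound only when $\log(|\lambda|x)\ll\sqrt{\log x}$. Your fallback $(1+|\lambda|x)\sup_{u\le x}|\Pi_k(u;\chi)|$ is also a small-$|\lambda|$ bound and covers the same range.

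The ``absorption'' of the remaining range $\e^{c\sqrt{\log x}}/x<|\lambda|\le(\log x)^{-10}$ is unjustified. The quantity $xq^{1/2}|\lambda|^{1/4-\eps}$ exceeds the trivial bound $\Pi_k(x)$ only when $|\lambda|\ge(\log x)^{-O(1)}$. At, say, $|\lambda|=x^{-1/2}$ the lemma asserts a power saving that no zero-free region provides.

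In fact the conclusion of your step 3 is false: it would give $\ll x\e^{-c\sqrt{\log x}}$ for non-principal $\chi$ uniformly in $|\lambda|\le(\log x)^{-10}$. Take $\lambda=1/(qr)$ with $r\nmid q$ prime and $qr\asymp(\log x)^{10}$. Then $\chi(n)\e(\lambda n)$ is $qr$-periodic, with mean of modulus $q^{1/2}/(\phi(q)(r-1))$ on reduced residues. Including the multiples of $r$, one checks that the sum is $\gg x(\log x)^{-O(1)}$ whenever $k/\log\log x$ stays away from $1$.

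\textbf{The missing idea} is a genuine minor-arc estimate for that range, which the paper supplies as follows.
\begin{itemize}
\item It expands the primitive character as $\chi(n)=G(\overline\chi)^{-1}\sum_{(a,q)=1}\overline\chi(a)\e(an/q)$. This, through $\phi(q)/|G(\overline\chi)|\le q^{1/2}$, is the actual source of the factor $q^{1/2}$.
\item It detects $\Omega(n)=k$ by $\int_0^1\e(\alpha(\Omega(n)-k))\de\alpha$.
\item It bounds $\sum_{n\le x}\e(\alpha\Omega(n))\e((a/q+\lambda)n)$ uniformly in $\alpha$, via Granville--Lamzouri's reduction to $\sum_{m\le M}\sum_{p}\e((a/q+\lambda)mp)$ and Vinogradov's bound for exponential sums over primes.
\item For this it checks that $(a/q+\lambda)m$ has rational approximations with denominators in $[L/m,xm/L]$, where $L=\min\{1/2|\lambda|,\e^{c\sqrt{\log x}/2}\}$.
\end{itemize}
This yields the bound $q^{1/2}xL^{-1/4+\eps}$.

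\textbf{Your step 4 is also misdirected.} The lemma can only hold for non-principal $\chi$: for $\chi$ principal, $k=1$ and $\lambda=0$, the left side is $\sim x/\log x$. The paper applies it only to primitive $\chi$ of conductor $q'>1$, so there is no Hankel loop. Even for principal $\chi$, the loop term $\int\e(\lambda u)g_k(u)\de u$ tends to $\int g_k\asymp\Pi_k(x;\chi)$ as $\lambda\to0$. So it cannot be $\ll x|\lambda|^{1/4-\eps}$, and the $|\lambda|^{1/4-\eps}$ term cannot come from a pole at $s=1$.
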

The proof is in two parts. If $\lambda \ll \frac{\e^{c\sqrt{\log x}}}{x},$ we have $\Pi_k(x;\chi\e_\lambda) \approx \Pi_k(x;\chi)$ which is controlled by the work of Dupain, Hall and Tenenbaum \cite{dht}; else, $\Pi_k(x;\chi\e_\lambda)$ is a minor arc exponential sum that can be bounded using the work of Granville and Lamzouri \cite{exponential}.
\begin{proof} 
    We first consider the case $\lambda = 0$. We write $f_k(n) = \int_0^1 \e(\alpha \Omega(n)) \e(-\alpha k) \de\alpha,$ so that

    \begin{align*}
        \Pi_k(x; \chi) &= \int_0^1 \e(-\alpha k) \sum_{n\le x} \chi(n)\e(\alpha \Omega(n)) \de\alpha
        \\&\le \sup_{\alpha \in\R} \left|\sum_{n\le x} \chi(n)\e(\alpha \Omega(n))\right| 
        \\&\ll_{A,\eps} x\e^{-c\sqrt{\log x}}
    \end{align*}
    for an absolute constant $c$ by the result of Dupain, Hall and Tenenbaum \cite[Lemma 2]{dht} when $\chi$ is of conductor $q \ll (\log x)^A$ and $k\le (2-\eps)\log\log x.$

    Then, if $|\lambda| \le \frac{\e^{\frac{c}{2}\sqrt{\log x}}}{x},$ we may integrate by parts
    \begin{align*}
        \Pi_k(x;\chi\e_\lambda) &= \sum_{n\le x} f_k(n) \chi(n) \left(\e(\lambda x)- \int_n^x \e(u\lambda) 2\ii\pi\lambda \de u\right)
        \\&= \Pi_k(x; \chi) \e(\lambda x) - 2\ii\pi\lambda \int_0^x \e(u\lambda) \Pi_k(u; \chi) \de u
        \\&\ll (1+|\lambda| x) x\e^{-c\sqrt{\log x}} \ll x\e^{-\frac{c}{2}\sqrt{\log x}}.
    \end{align*}
    If $|\lambda| > \frac{\e^{\frac{c}{2}\sqrt{\log x}}}{x},$ we instead decompose $\chi(n) = \frac{1}{G(\overline\chi)}\sum_{(a,q) = 1} \overline{\chi}(a) \e\left(\frac{a}{q}n\right),$ so that
    \begin{equation}\label{step45}
        \Pi_k(x;\chi\e_\lambda) = \frac{1}{G(\overline\chi)}\sum_{(a,q) = 1} \int_0^1 \e(-\alpha k) \sum_{n\le x} \e\big(\alpha \Omega(n)\big) \e\left(\left(\frac{a}{q}+\lambda\right)n\right) \de \alpha.
    \end{equation}
    We now write for $a$ coprime with $q,$ $$\lambda_a := \frac{a}{q}+\lambda,\, L := \min\left\{\frac{1}{2|\lambda|}, \e^{\frac{c}{2}\sqrt{\log x}}\right\},\, Q := x/L,\, r := \min\left\{r: \exists(b,r) = 1, \left|\lambda_a - \frac{b}{r}\right| \le \frac{1}{rQ}\right\} \le Q.$$ 
    Note that $\frac{b}{r}\ne \frac{a}{q}$ since $|\lambda| > \frac{\e^{\frac{c}{2}\sqrt{\log x}}}{x}\ge 1/Q.$ Therefore, $\frac{1}{rq} \le \left|\frac{a}{q}- \frac{b}{r}\right| \le |\lambda| + \frac{1}{rQ},$ so that $r \ge \frac{1}{|\lambda|}\frac{qQ}{Q-q} \ge L.$ 
    
    We thus have $L \le r\le x/L$ with $\left|\lambda_a - \frac{b}{r}\right| \le \frac{1}{rQ} \le \frac{1}{x}$ and $L \gg (\log x)^{10}$ and may thus apply the result of Granville and Lamzouri \cite[Theorem 1.1]{exponential} with $M := L^{1/2}/\log x$ to get for all $\alpha,$
    \begin{equation}\label{step46}
        \sum_{n\le x} \e\big(\alpha\Omega(n)\big) \e(\lambda_a n) = \sum_{m\le M} \e\big(\alpha\Omega(m)\big) \sum_{x/M \le p \le x/m} \e(\alpha) \e(\lambda_a mp) + O\left(\frac{x}{L^{1/4}}\right).
    \end{equation}
    Now, for given $m,$ using the fact that $\lambda_a m = \frac{am}{q} + \lambda m$ with $Lm/x \le |\lambda m| \le m/L,$ we may construct (analogously to $\frac{b}{r}$ above) $\frac{b_m}{r_m}$ with $\left|\lambda_a m - \frac{b_m}{r_m}\right| \le \frac{1}{x}$ and $\frac{L}{m}\le r_m \le x\frac{m}{L}.$
    Thus, by the classical result of Vinogradov \cite[§9]{vinogradov}, we have for $\eps > 0$ and $m\le M = L^{1/2}/\log x,$ $$\sum_{x/M \le p \le x/m} \e(\lambda_a mp) \ll \frac{x}{m (L/m)^{1/2-\eps}}$$
    and thus by (\ref{step46})
    \begin{align*}
        \left|\sum_{n\le x} \e\big(\alpha\Omega(n)\big) \e(\lambda_a n)\right| &\le \sum_{m\le M} \left|\sum_{x/M \le p \le x/m} \e(\lambda_a mp)\right| +  O\left(\frac{x}{L^{1/4}}\right)
        \\&\ll \sum_{m\le M} \frac{x}{m (L/m)^{1/2-\eps}} +  \frac{x}{L^{1/4}}
        \\&\ll x\left(\frac{M}{L}\right)^{1/2-\eps} + \frac{x}{L^{1/4}} \ll \frac{x}{L^{1/4-\eps}}.
    \end{align*}
    Injecting that bound in (\ref{step45}), we obtain the desired bound
    $$\Pi_k(x;\chi\e_\lambda) \ll q^{1/2}\frac{x}{L^{1/4-\eps}}.$$
\end{proof}

We can now prove Theorem \ref{main} using Proposition \ref{xi-pik}, which will be proven in the next section.
\begin{proof}[Proof of Theorem \ref{main}]
    We apply Lemma \ref{decomp} to $f = f_k\e_\lambda$ to bound the left-hand side of (\ref{main-eq}) is bounded by
    \begin{align}
        \ll E :&= \sum_{q\le Q} \frac{1}{\phi(q)} \sum_{\substack{q'st = q \\ q'\ne 1}} t\sqrt{q'}\starsum_{\chi\Mod q'} \max_{|\lambda|\le \theta} \left|\Pi_k(x; \chi_{/t}\e_{\lambda})\right| \notag
        \\&= \sum_{t\le Q} t \sum_{1 < q'\le Q/t} \sqrt{q'} \starsum_{\chi\Mod q'}  \sum_{s\le Q/q't} \frac{1}{\phi(q'st)} \max_{|\lambda|\le \theta} \left|\Pi_k(x; \chi_{/t}\e_{\lambda})\right| \notag
        \\&\le \log Q \sum_{t\le Q} \frac{t}{\phi(t)} \sum_{1 < q'\le Q/t} \frac{\sqrt{q'}}{\phi(q')} \starsum_{\chi\Mod q'}  \max_{|\lambda|\le \theta} \left|\Pi_{k-\Omega(t)}(x/t; \chi\e_{\lambda t})\right|, \label{step99}
    \end{align}
    noting that we always have $\phi(q'st) \ge \phi(q')\phi(s)\phi(t)$ and $$\Pi_k(x; \chi_{/t}\e_\lambda) = \sum_{\substack{n \le x \\t|n \\ \Omega(n) = k}} \chi(n/t) \e(\lambda n) = \sum_{\substack{m\le x/t \\ \Omega(m) = k - \Omega(t)}} \chi(m) \e(\lambda tm) = \Pi_{k-\Omega(t)}(x/t;\chi\e_{\lambda t}).$$

    We now split the sum over $q'$ in (\ref{step99}) into dyadic intervals. For all $1\le t\le Q$ and $Q' \le Q/t,$ we have by Proposition \ref{xi-pik}
    \begin{align}
        \sum_{Q' < q'\le 2Q'} \frac{\sqrt{q'}}{\phi(q')} \starsum_{\chi\Mod q'} \max_{|\lambda|\le \theta} \left|\Pi_{k-\Omega(t)}(x/t; \chi\e_{\lambda t})\right| &\le Q'^{-1/2} \log Q' \big(\Xi(f_{k-\Omega(t)}, x/t,2Q', \theta t) - \Xi(f_{k-\Omega(t)}, x/t,Q', \theta t)\big) \notag\\&\ll \left(\frac{x^{1/2}Q'^{3/2}}{t^{1/2}} + \frac{x^{5/6}Q'^{1/2}}{t^{5/6}} + \frac{xQ'^{-1/2}}{t} + \frac{xQ'^{3/2}\theta^{1/2}}{t^{1/2}}\right) (\log x)^A \notag 
        \\&\ll \left(\frac{x^{1/2}Q^{3/2}}{t^{2}} + \frac{x^{5/6}Q^{1/2}}{t^{4/3}} + \frac{xQ'^{-1/2}}{t} + \frac{xQ^{3/2}\theta^{1/2}}{t^{2}}\right) (\log x)^A \notag
        \\&\ll \frac{x}{t} (\log x)^A \left(Q'^{-1/2} + (\log x)^{-B/2}\right)
        \label{step100}
    \end{align} 
    where $A$ is an absolute constant and we used the conditions $Q \le x^{1/3}(\log x)^{-B},$ $\theta \le Q^{-3}(\log x)^{-B}.$

    Thus, for $Q' \ge (\log x)^{B/7},$ the left-hand side of (\ref{step100}) is $\ll \frac{x}{t}(\log x)^{A-B/14}.$ 
    Moreover, when $Q'\le (\log x)^{B/7},$ we also have by Lemma \ref{sw-pik} with $\theta \le (Q't)^{-3}(\log x)^{-B}$
    \begin{align*}
        \sum_{Q' < q'\le 2Q'} \frac{\sqrt{q'}}{\phi(q')} \starsum_{\chi\Mod q'} \max_{|\lambda|\le \theta} \left|\Pi_{k-\Omega(t)}(x/t; \chi\e_{\lambda t})\right| &\ll Q'^{3/2} \frac{x}{t}\left(Q'^{1/2}(\theta t)^{1/4-\eps} + \e^{-c\sqrt{\log x}}\right)
        \\&\ll  \frac{x}{t}\left((\log x)^{-B/14 + \eps} + (\log x)^{B/4}\e^{-c\sqrt{\log x}}\right)
        \\&\ll \frac{x}{t}(\log x)^{A-B/14}.
    \end{align*}
    %shows the left-hand side of (\ref{step100}) is also $\ll Q'^{3/2}\frac{x}{t}\e^{-c\sqrt{\log x}} \ll \frac{x}{t}(\log x)^{A-B/2}$ when $Q'\le (\log x)^B.$ %\theta t \le Q^{-2}  
    Thus, we can inject this bound in (\ref{step99}) to get 
    $$E \ll (\log Q)^2 \sum_{t\le Q} \frac{t}{\phi(t)} \frac{x}{t} (\log x)^{A-B/2} \ll x(\log x)^{3+A-B/14}$$ as desired fixing $B = 14(A+C+3).$
\end{proof}

\section{A generalised Vaughan estimate}\label{bv1}

The main result of this part is a generalised character sum estimate for a large class of functions, given below. We then use this result to prove Proposition \ref{xi-pik}.

\begin{theorem}\label{main-bv}
    Assume $f$ on $[1,x]$ can be expressed as a Dirichlet convolution $f_{|[1,x]} = (g_1 * g_2 * \ldots * g_r)_{|[1,x]},$ where each $g_i$ is log-bounded and verifies either of the following:
    \begin{itemize}
        \item either $g_i$ verifies $g_i(n+1) - g_i(n) \ll 1/n$ for $1\le n\le x$ (we then say $g_i$ is a type I function);
        \item or $g_i$ is supported on $\left[1, x^{2/3}\right]$ (we then say $g_i$ is a type II function). 
    \end{itemize}

    Then $$\Xi(f, x, Q, \theta) \ll \left(x^{1/2}Q^2 + x^{5/6}Q + x + xQ^2\theta^{1/2}\right)(\log x)^{O(r)}.$$
\end{theorem}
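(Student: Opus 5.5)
Following Vaughan and Liu--Zhan, we pass to Dirichlet series via Perron's formula and Lemma \ref{lambda-term}, and then treat the product as a (generalised) Type I or Type II bilinear form. Fix $q\le Q$ and a primitive $\chi \Mod q$, and split each $g_i$ dyadically, $g_i=\sum_{N_i} g_i\Ind_{(N_i,2N_i]}$. This costs $(\log x)^{r}$ pieces, each a convolution of $r$ functions supported on dyadic blocks with $\prod N_i\ll x$. Writing $\e(\lambda n)$ through the Mellin-type representation (as in Liu--Zhan, using $\sum_{n\le x} a_n\e(\lambda n) = \frac{1}{2\pi\ii}\int A(s)\,\frac{\de}{\de u}$-transform, i.e. summation by parts against $U(s,x,\lambda)$), each piece becomes
$$\int_{-T}^{T}\prod_{i} G_i(\sigma+\ii\tau,\chi)\,U(\cdot)\,\de\tau$$
with $\sigma=1/2$ (or $\sigma=1/\log x$ shifted appropriately), where $G_i(s,\chi)=\sum_{n\sim N_i} g_i(n)\chi(n)n^{-s}$. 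Lemma \ref{lambda-term} gives the weight $x^{1/2}(1+\sqrt{|\tau|})^{-1}$ for $|\tau|\le 4\pi\theta x$ and $x^{1/2}|\tau|^{-1}$ beyond. The max over $|\lambda|\le\theta$ is thereby removed uniformly, because the only $\lambda$-dependence is in this weight.

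We then group the factors. If some Type I factor has $N_i\ge x^{1/3}$ (more precisely, if the type I part is long), we use the smoothness $g_i(n+1)-g_i(n)\ll 1/n$: partial summation against $\chi(n)n^{-s}$ with Pólya--Vinogradov gives $G_i\ll q^{1/2}(1+|\tau|)\log x$ for nonprincipal $\chi$. Otherwise, every factor is either Type II (of length at most $x^{2/3}$) or a short Type I, and a standard greedy argument partitions the factors into two products $M(s)$, $N(s)$ of lengths $M,N$ with $MN\asymp x$ and both in $[x^{1/3},x^{2/3}]$. (Adding factors one at a time, each of length at most $x^{2/3}$, the partial product must cross $x^{1/3}$ before exceeding $x^{2/3}$, unless a single factor is already long, which is handled similarly.) For this bilinear case we apply Cauchy--Schwarz in $(\chi,\tau)$ and the hybrid large sieve
$$\sum_{q\le Q}\frac{q}{\phi(q)}\starsum_{\chi}\int_{-T}^{T}|M(\ii t,\chi)|^2\de t\ll (M+Q^2T)\sum|a_m|^2,$$
integrated dyadically in $\tau$ with the weights above. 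With $T\approx$ each dyadic range, this yields for the block $|\tau|\sim T$ a contribution $\ll \min(T^{-1/2},T^{-1})\,(M+Q^2T)^{1/2}(N+Q^2T)^{1/2}(\log x)^{O(r)}$, times the normalisation $x^{1/2}(MN)^{-1/2}$, giving $x+x^{1/2}Q\sqrt{M+N}+ x^{1/2}Q^2T^{1/2}$ up to logs. This is $\ll x+x^{5/6}Q+x^{1/2}Q^2$ for $T\ll1$, and the range $T\le\theta x$ produces the term $x^{1/2}Q^2(\theta x)^{1/2}=xQ^2\theta^{1/2}$. Beyond $\theta x$ the $1/\tau$ decay makes the dyadic sum converge, with the truncation at $T=x^{O(1)}$ handled by the trivial Perron error.

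For the Type I case, the long smooth factor $G_1$ is paired with the remaining product $R$ (length $x/N_1\le x^{2/3}$). We bound $|G_1|$ pointwise by $q^{1/2}(1+|\tau|)$ or use its mean value, combining Cauchy--Schwarz with the large sieve for $R$. This gives a bound of Vaughan's shape $x^{1/2}Q^2+xQ^2\theta^{1/2}$ after summing over $q$ (the $q^{1/2}$ times $Q^2$ characters gives the $Q^{5/2}$ that must be balanced). If this is too lossy, I would instead split $G_1=$ smooth main part plus short remainder via Euler--Maclaurin, as Vaughan does for the Type I sums $\sum_{m}a_m\sum_{n}\chi(mn)$.

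\textbf{Main obstacle.} I expect the main obstacle to be this Type I case. The crude Pólya--Vinogradov bound loses too much when $N_1$ is near $x^{1/3}$ and $Q$ near $x^{1/3}$, so the cut-off between "long Type I" and the bilinear regime, together with the $(1+|\tau|)$ factor interacting with the $\theta$-weights of Lemma \ref{lambda-term}, must be tuned to land exactly on $x^{5/6}Q$. The combinatorial grouping and bookkeeping of the $(\log x)^{O(r)}$ factors are routine.
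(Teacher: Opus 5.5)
Your bilinear part (Perron's formula with the weights of Lemma \ref{lambda-term}, Cauchy--Schwarz and the hybrid large sieve, which together are exactly Lemma \ref{type-II}) and your greedy grouping are sound. The Type I case you flag as the main obstacle, however, is a genuine gap, and none of the tools you suggest for it can work inside the Perron integral. Partial summation with P\'olya--Vinogradov gives $G_1(\tfrac12+\ii\tau,\chi)\ll q^{1/2}(1+|\tau|)N_1^{-1/2}\log x$, whereas $U(s,x,\lambda)$ decays at best like $x^{1/2}|\tau|^{-1}$. On the block $|\tau|\sim T$ you are therefore left with $q^{1/2}(x/N_1)^{1/2}\sum_\chi\int_T^{2T}|R|\,\de\tau$. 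The large sieve can only bound this by roughly $Q^{5/2}(x/N_1)^{1/2}T$, which grows linearly in $T$. You cannot keep $T$ small either, since truncating Perron's formula at height $T$ already costs about $Q^2x/T$ after summing over characters. Euler--Maclaurin does not remove the factor $|s|$: for non-principal $\chi$ there is no main term, and the remainder involves the derivative of $n^{-s}$. The mean-value route is just Lemma \ref{type-II} with $M=N_1$, which produces $x^{1/2}QN_1^{1/2}$, as large as $xQ$ when $N_1\approx x$.

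The missing idea, which is how the paper proceeds (following Vaughan), is to keep these sums out of the Dirichlet-series framework altogether. Fix a cutoff $y\le x^{1/3}$ and isolate the terms in which some $n_j>x/y$. Such a $j$ is unique, $g_j$ must be of type I (a type II factor vanishes beyond $x^{2/3}\le x/y$), and the other variables have product $m<y$. For each fixed such tuple, expand $\chi(n)=G(\overline\chi)^{-1}\sum_{(a,q)=1}\overline\chi(a)\e(an/q)$. If in addition $y\le 1/(2Q\theta)$, then $|\lambda m|\le 1/(2q)$, hence $\|a/q+\lambda m\|\ge 1/(2q)$ for every $a$. Partial summation using $g_j(n+1)-g_j(n)\ll 1/n$ then bounds the inner sum by $q^{1/2}\log q\log x$ (Lemma \ref{type-I}). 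These terms therefore contribute $\ll yQ^{5/2}(\log x)^{O(r)}$ to $\Xi$, with no $\tau$-integral at all.

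The terms with product at most $y$ are trivial. The rest (all $n_i\le x/y$) are grouped as you do, but with one factor of size in $[y,x/y]$ rather than $[x^{1/3},x^{2/3}]$, so Lemma \ref{type-II} gives $x+xQy^{-1/2}+x^{1/2}Q^2+xQ^2\theta^{1/2}$. The cutoff must depend on $Q$ and $\theta$. Your fixed $y=x^{1/3}$ leaves $x^{1/3}Q^{5/2}$, which exceeds $x^{1/2}Q^2+x^{5/6}Q$ once $Q>x^{1/3}$, and it ignores the constraint $y\le 1/(2Q\theta)$ imposed by the twist. With $y=\min\{x^{2/3}/Q,\,x^{1/3},\,1/(2Q\theta)\}$ one has $yQ^{5/2}\le x^{2/3}Q^{3/2}$ and $xQy^{-1/2}\ll x^{2/3}Q^{3/2}+x^{5/6}Q+xQ^{3/2}\theta^{1/2}$. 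Since $x^{2/3}Q^{3/2}\le x^{1/2}Q^2+x^{5/6}Q$ by AM--GM, this gives the stated bound.
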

\begin{remark}
    Given the expression of $\Xi(f, x, q, \theta),$ the conclusion also holds if $f$ can be expressed as a linear combination of such convolutions. For example, for $f = \Lambda,$ Vaughan's identity $\Lambda = \Lambda_{|[1, x^{1/2}]} + \mu_{|[1, x^{1/2}]} * \log - \mu_{|[1, x^{1/2}]} * 1 * \Lambda_{|[1, x^{1/2}]}$ lets us retrieve (\ref{vaughan-xi-theta}) from Theorem \ref{main-bv}.
\end{remark}
We prove Theorem \ref{main-bv} using Vaughan's method \cite{vaughan-bv}. This requires estimates on sums of the form $$\sum_{m,n} a(m) b(n) \chi(mn) \e(\lambda mn),$$ which are given by the two lemmas below.

\begin{lemma}\label{type-I}
    Let $g$ such that $g(n+1) - g(n) \ll 1/n,$ $\chi$ a primitive Dirichlet character of conductor $q,$ and $|\lambda|\le 1/2q.$ Then $$\sum_{n\le x} g(n) \chi(n) \e(\lambda n) \ll q^{1/2} \log q \log x.$$
\end{lemma}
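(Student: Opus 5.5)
The plan is to apply Abel summation twice: first to strip off the smooth weight $g(n)$, then to handle the phase $\e(\lambda n)$. Both reductions rest on the Pólya--Vinogradov inequality together with a twisted version of it. Write $S(u) := \sum_{n\le u}\chi(n)\e(\lambda n)$.

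The key intermediate claim is
$$S(u)\ll q^{1/2}\log q \quad \text{uniformly in } u\le x,$$
valid when $|\lambda|\le 1/2q$. Granting this claim, partial summation gives
$$\sum_{n\le x} g(n)\chi(n)\e(\lambda n) = g(\lfloor x\rfloor)S(x) - \sum_{n<x} \big(g(n+1)-g(n)\big) S(n).$$
Since $g(n+1)-g(n)\ll 1/n$, we get $g(n)\ll \log n + |g(1)| \ll \log x$. The boundary term is therefore $\ll q^{1/2}\log q\log x$. The sum is bounded by $q^{1/2}\log q\sum_{n<x}1/n\ll q^{1/2}\log q\log x$, which is the stated bound.

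To prove the claim, expand the primitive character via Gauss sums, $\chi(n)=\frac{1}{G(\overline\chi)}\sum_{b\Mod q}\overline\chi(b)\e(bn/q)$, using $|G(\overline\chi)|=\sqrt q$. This gives
$$S(u)=\frac{1}{G(\overline\chi)}\sum_{b\Mod q}\overline\chi(b)\sum_{n\le u}\e\Big(\Big(\frac bq+\lambda\Big)n\Big).$$
The term $b=0$ vanishes because $\chi$ is primitive; if $q=1$ the statement is trivial, since then $|\lambda|\le1/2$ is not small and we need a separate argument. For $b\ne0$, represent $b$ by $-q/2<b\le q/2$. The inner geometric sum is then $\ll \|b/q+\lambda\|^{-1}$. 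The condition $|\lambda|\le 1/2q$ is used precisely here. For $|b|\ge 2$ we have $\|b/q+\lambda\|\ge |b|/q - 1/2q\ge |b|/2q$. For $|b|=1$ the distance is $\ge 1/2q$, so we need it bounded away from zero, and it is. Some care is needed near $|b|\approx q/2$, where $\|\cdot\|$ measures distance to the nearest integer; there $\|b/q+\lambda\|\ge (q-|b|)/q - 1/2q$, which is again $\gg 1/q$ times a harmonic weight. Summing gives
$$S(u)\ll q^{-1/2}\sum_{1\le |b|\le q/2}\frac{q}{|b|}\ll q^{1/2}\log q.$$

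The main obstacle is exactly this lower bound on $\|b/q+\lambda\|$. It works cleanly only for $|b|\ge2$. The borderline cases $b=\pm1$ with $\lambda=\mp1/2q$, and $|b|$ near $q/2$, give a distance of $1/2q$, which is still acceptable. I would handle the degenerate case $q=1$ (or $q=2$) separately. For $q=1$, $\chi$ is trivial and $\sum_{n\le u}\e(\lambda n)$ is only bounded by $\min(u,\|\lambda\|^{-1})$, which is not $O(1)$ when $\lambda$ is small. In that case the bound $q^{1/2}\log q$ degenerates to $0$. I expect the intended reading is $q\ge2$, as in the application, where only $q'\ne1$ occurs, or with $\log q$ interpreted as $\log(q+1)$. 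I would note this and restrict to primitive nontrivial $\chi$.
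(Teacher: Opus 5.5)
Your proof is correct and uses the same ingredients as the paper: the Gauss-sum expansion of $\chi$, the bound $\|a/q+\lambda\|^{-1}$ for the geometric sums (which is where $|\lambda|\le 1/2q$ enters), and partial summation against the total variation of $g$. The only difference is the order: you sum by parts first and then bound $S(u)$, whereas the paper expands $\chi$ first and sums by parts inside each $a$, which is immaterial. Your caveat about $q=1$ is also right: the statement fails for the trivial character under any reading of $\log q$, so restricting to $q\ge 2$ is the correct fix, and the paper's argument tacitly needs it too (harmlessly, since the $q=1$ term is bounded trivially by $x(\log x)^{O(r)}$ or excluded where the lemma is applied).
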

\begin{proof}
    We can decompose $\chi$ as a sum of exponential terms, $$\chi(n) =  \frac{1}{G(\overline{\chi})} \sum_{(a,q) = 1} \overline{\chi}(a) \e\left(\frac{an}{q}\right),$$ where $G(\overline{\chi})$ is the Gauss sum verifying $|G(\overline\chi)| = q^{1/2}.$ We thus have 
    \begin{align*}
        \left|\sum_{n\le x} g(n) \chi(n) \e(\lambda n)\right| &\le q^{-1/2} \sum_{(a,q) = 1} \left|\sum_{n\le x} g(n) \e\big((a/q+\lambda) n\big)\right| \\&\ll q^{-1/2} \sum_{(a,q) = 1} \frac{1}{\|a/q + \lambda\|} \left(|g(1)| + \sum_{n\le x} \big|g(n+1) - g(n)\big|\right)
        \\&\ll q^{1/2} \log q\log x,
    \end{align*}
    integrating by parts and using the hypotheses on $\lambda$ and $g.$
\end{proof}

\begin{lemma}\label{type-II}
    Let $M\le M', N\le N'$ with $MN \asymp x \asymp M'N',$ and $a, b$ supported on $[M,M'], [N,N']$ respectively with $$\sum_{M\le m \le M'} \frac{|a(m)|^2}{m}, \sum_{N\le n \le N'} \frac{|b(n)|^2}{n} \ll (\log x)^C$$ for some $C.$ 
    Then for all $\theta,$ $$\Xi(a * b, x, Q, \theta) %= \sum_{q\le Q} \starsum_{\chi\bmod q} \max_{|\lambda|\le \theta} \left|\sum_{mn\le x} a(m) b(n) \chi(mn) \e(\lambda mn)\right| 
    \ll \left(x + x^{1/2}Q(M+N)^{1/2} + x^{1/2}Q^2 + xQ^2\theta^{1/2}\right)(\log x)^{C+1}.$$
\end{lemma}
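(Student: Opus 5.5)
We will follow the Liu--Zhan approach and combine the multiplicative large sieve with Perron's formula, with Lemma \ref{lambda-term} used to handle the twist $\e(\lambda mn)$.

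\textbf{Step 1: separating $\lambda$.} Fix $q$, $\chi$ and $\lambda$, and put $S(u):=\sum_{mn\le u}a(m)b(n)\chi(mn)$. Partial summation gives
$$\sum_{mn\le x} a(m)b(n)\chi(mn)\e(\lambda mn) = \e(\lambda x)S(x) - 2\ii\pi\lambda\int_1^x S(u)\e(\lambda u)\de u .$$
This is poor when $\lambda x$ is large, so we work with the Mellin representation instead. With $A(s,\chi)=\sum_m a(m)\chi(m)m^{-s}$ and $B(s,\chi)$ defined in the same way, Perron's formula on the line $\sigma = 1/2$ expresses the sum as
$$\frac{1}{2\ii\pi}\int_{(1/2)} A(s,\chi)B(s,\chi)\, s\,U(-s,x,\lambda)\,ds ,$$
up to a boundary term at $u=1$ and a truncation error. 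Equivalently, one writes $\e(\lambda n)$ via $\int_1^x u^{s-1}\e(\lambda u)\de u$ after smoothing. Lemma \ref{lambda-term} bounds the kernel $K(\tau):=|U(\cdot)|$ at height $\tau$ by $x^{1/2}/(1+\sqrt{|\tau|})$ for $|\tau|\le 4\pi\theta x$, and by $x^{1/2}/|\tau|$ beyond that. The key point is that this bound is uniform in $|\lambda|\le\theta$, so the maximum over $\lambda$ can be taken inside the $\tau$-integral. We therefore obtain
$$\Xi(a*b,x,Q,\theta)\ll \int_{-T}^{T} x^{1/2} w(\tau)\sum_{q\le Q}\frac{q}{\phi(q)}\starsum_{\chi\Mod q}\left|A(\tfrac12+\ii\tau,\chi)B(\tfrac12+\ii\tau,\chi)\right|\de\tau + \text{error},$$
with $w(\tau)=\min\{(1+|\tau|)^{-1/2},\,(1+|\tau|)^{-1}\}$ switching at $|\tau|\asymp\theta x$, and $T$ a power of $x$. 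If the Perron truncation at $\sigma=1/2$ causes trouble, we will instead use a smoothed cutoff in $mn\le x$, whose cost is a factor $\log x$.

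\textbf{Step 2: the large sieve on dyadic $\tau$-ranges.} Split $|\tau|$ into dyadic ranges $[T_0,2T_0]$. Apply Cauchy--Schwarz and the hybrid large sieve
$$\sum_{q\le Q}\frac{q}{\phi(q)}\starsum_{\chi}\int_{T_0}^{2T_0}\Big|\sum_{n\sim N}c_n\chi(n)n^{-\ii\tau}\Big|^2\de\tau\ll (Q^2T_0+N)\sum|c_n|^2 .$$
With $c_n=b(n)n^{-1/2}$, the hypothesis gives $\sum |c_n|^2\ll(\log x)^C$, and the same holds for $a$. For each dyadic range this yields a contribution
$$x^{1/2}w(T_0)\big((Q^2T_0+M)(Q^2T_0+N)\big)^{1/2}(\log x)^{C}.$$
Expanding the product, we get terms $Q^2T_0$, $Q T_0^{1/2}(M+N)^{1/2}$ and $(MN)^{1/2}\asymp x^{1/2}$. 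The range $T_0\lesssim 1$ gives $x^{1/2}Q^2$, $x^{1/2}Q(M+N)^{1/2}$ and $x$. For $1\le T_0\le\theta x$ the weight is $T_0^{-1/2}$, so the terms become $x^{1/2}Q^2T_0^{1/2}$, $x^{1/2}Q(M+N)^{1/2}$ and $x/T_0^{1/2}$. The worst case is $T_0=\theta x$, giving $x^{1/2}Q^2(\theta x)^{1/2}=xQ^2\theta^{1/2}$. Beyond $\theta x$ the weight is $T_0^{-1}$, and the terms decay or stay bounded by the previous ones. Summing over the $O(\log x)$ dyadic ranges produces the extra $\log x$.

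\textbf{Main obstacle.} The hard part is justifying Step 1 rigorously, namely the uniformity in $\lambda$ and the control of the Perron truncation and tails. Where Lemma \ref{lambda-term} degenerates because $\sigma$ is small, we will shift the line to $\sigma=1/2+1/\log x$, so that the factor $1+1/\sigma$ stays $O(1)$. For the truncation at height $T=x^2$ we will use the trivial bound, which contributes $O(x^{1/2}Q^2\cdot\text{small})$.
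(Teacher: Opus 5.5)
Your proposal is correct and follows essentially the same route as the paper: a Perron/Mellin representation with kernel $U(s,x,\lambda)=\int_1^x u^{s-1}\e(\lambda u)\de u$ on the line $\sigma=1/2$, bounded uniformly in $|\lambda|\le\theta$ by Lemma~\ref{lambda-term}, then Cauchy--Schwarz, Montgomery's hybrid large sieve on dyadic $\tau$-ranges, and a dyadic summation. There are a few minor slips that do not affect the argument: the kernel should be $U(s,x,\lambda)$ rather than $s\,U(-s,x,\lambda)$; your weight $w$ should be the piecewise function you describe in words ($(1+|\tau|)^{-1/2}$ up to $|\tau|\asymp\theta x$, then $(1+|\tau|)^{-1}$), not a $\min$; and no shift of $\sigma$ is needed, since $1+1/\sigma=3$ already at $\sigma=1/2$.
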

\begin{proof}
    This is essentially equation (4.12) in Liu and Zhan \cite{liuzhan}; we provide here the main steps of the proof.
    
    We integrate by parts $\e(\lambda mn)$ to reduce to sums of the form $\sum_{mn\le u} a(m) b(n) \chi(mn),$ and then use Perron's formula to handle the $mn \le u$ condition. We then obtain
    $$\sum_{mn\le x} a(m) b(n) \chi(mn) \e(\lambda mn) = \int_{\sigma - \ii x^{1+\sigma}}^{\sigma + \ii x^{1+\sigma}} \frac{1}{2\ii\pi} \left(\int_0^x \e(u\lambda) u^{s-1} \de u\right) \cdot \sum_m \frac{a(m)\chi(m)}{m^s} \sum_n \frac{b(n)\chi(n)}{n^s} \de s + O(1)$$
    for $|\lambda|\le\theta$ and $\sigma > 0$ (since $a$ and $b$ have finite support, there is no concern of convergence).

    Taking the max over $\lambda,$ summing on $q$ and $\chi$ and using Cauchy-Schwarz, we then get 
    \begin{equation}\label{step7}
        \Xi(a * b, x, Q, \theta) \le \int_{\sigma - \ii x^{1+\sigma}}^{\sigma + \ii x^{1+\sigma}} \frac{1}{2\ii\pi} \max_{|\lambda|\le\theta}\left|\int_0^x \e(u\lambda) u^{s-1} \de u\right| \cdot S_a(Q,s)^{1/2} S_b(Q,s)^{1/2} \de s + O(Q^2),
    \end{equation}
    where $$S_a(q,s) := \sum_{q\le Q} \starsum_{\chi\bmod q} \left|\sum_m \frac{a(m)\chi(m)}{m^s}\right|^2$$ and $S_b(q,s)$ is defined analogously.

    Now, we have by Montgomery \cite[Theorem 7.1]{montgomery} that for all $T,$ $$\int_T^{2T} S_a(q,\sigma + it) \de t \ll (M' + Q^2T) \sum_m \frac{|a(m)|^2}{m^{2\sigma}} \ll M^{1-2\sigma}(M + Q^2T)(\log x)^C.$$ 
    Picking $\sigma = 1/2,$ bounding the integral over $u$ by Lemma \ref{lambda-term} and using Cauchy-Schwarz, we then get for all $T$
    \begin{align*}
        \int_{\sigma + \ii T}^{\sigma + \ii 2T} \frac{1}{2\ii\pi} \max_{|\lambda|\le\theta} \left|\int_0^x \e(u\lambda) u^{s-1} \de u\right| \cdot S_a(Q,s)^{1/2} S_b(Q,s)^{1/2} \de s &\ll \frac{x^{1/2} (M + Q^2T)^{1/2} (N + Q^2T)^{1/2}}{1+ T^{1/2} + T\Ind_{T\ge 4\pi\theta x}}(\log x)^C
        \\&\ll \frac{x + x^{1/2}T^{1/2}Q(M+N)^{1/2} + x^{1/2}Q^2T}{1+ T^{1/2} + T\Ind_{T\ge 4\pi\theta x}}(\log x)^C,
    \end{align*}
    and we can get the same bound for the central interval $\Im(s) \in [-1,1].$ Thus, coming back to (\ref{step7}) and summing on dyadic intervals, we have 
    $$\Xi(a * b, x, Q, \theta) \ll \left(x + x^{1/2}Q(M+N)^{1/2}\log(1+\theta x) + x^{1/2}Q^2\left((\theta x)^{1/2} + \log x\right)\right)(\log x)^C.$$
\end{proof}

We can now prove Theorem \ref{main-bv}.

\begin{proof}[Proof of Theorem \ref{main-bv}]
    The proof follows Vaughan's ideas in \cite{vaughan-bv}, applied to the more general setting of Theorem \ref{main-bv}. We fix $y\le \min\left\{x^{1/3}, 1/2Q\theta\right\}$ and for any primitive Dirichlet character $\chi$ of conductor $q$ and $\lambda \le \theta,$ we split the sum 
    \begin{align}
        \sum_{n\le x} f(n) \chi(n) \e(\lambda n) &= \sum_{\prod_i n_i \le x} \prod_{i=1}^r g_i(n_i) \chi(n_i) \e\left(\lambda \prod_{i=1}^r n_i\right) \notag
        \\&= \sum_{j=1}^r \sum_{\substack{\prod_i n_i \le x \\ n_j > x/y}} \prod_{i=1}^r g_i(n_i) \chi(n_i) \e\left(\lambda \prod_{i=1}^r n_i\right) + \sum_{\substack{y < \prod_i n_i \le x \\ \forall i, n_i \le x/y}} \ldots + \sum_{\prod_i n_i \le y} \prod_{i=1}^r \ldots \notag
        \\&=: \sum_{j=1}^r S_j(\chi,\lambda) + S_{II}(\chi,\lambda) + O\left(y(\log x)^{O(r)}\right), \label{sum-decomp}
    \end{align}
    noting that two of the $n_i$ cannot simultaneously be greater than $x/y$ under the condition $\prod_i n_i \le x$ as $y\le x^{1/2}.$ 

    We first consider $S_j(\chi,\lambda).$ If $g_j$ is a type II function, then $S_j(\chi) = 0$ as $g_j(n_j) = 0$ for $n_j > x/y.$ If $g_j$ is a type I function, we write 
    \begin{align*}
        S_j(\chi) &= \sum_{\prod_{i\ne j} n_i < y} \prod_{i\ne j} g_i(n_i)\chi(n_i) \sum_{x/y < n_j \le x/\prod_{i\ne j} n_i} g_j(n_j) \chi(n_j) \e\left(\lambda \prod_{i\ne j}n_i \cdot n_j \right)
        \\&\ll \sum_{\prod_{i\ne j} n_i < y} \prod_{i\ne j} |g_i(n_i)| q^{1/2} \log q \log x
        \\&\ll y q^{1/2} (\log x)^{O(r)}
    \end{align*}
    where we used Lemma \ref{type-I} since $g_j$ is type I and $|\lambda| \prod_{i\ne j} n_i \le \theta y \le 1/2Q.$
    Thus, \begin{equation}\label{sum-I}
        S_j(\chi,\lambda) \ll y q^{1/2} (\log x)^{O(r)}
    \end{equation} holds for all $\chi, j, |\lambda| \le \theta.$

    We now consider $S_{II}(\chi,\lambda).$ We decompose the sum over $n_i$ into dyadic rectangles $N_i < n_i \le 2N_i$ with $y < \prod_i N_i \le x$ and $\forall i, N_i \le x/y$ and note $S_{II}(\chi,\lambda, N_1, \ldots, N_r)$ the corresponding sum.

    Once $N_1, \ldots, N_r$ are fixed, we define a subset $\I \subset \{1,\ldots, r\}$ as follows.
    \begin{itemize}
        \item if at least one of the $N_i$ is larger than $y,$ let $\I = \{\arg\max N_i\}$ a singleton, so that $$y \le \prod_{i\in \I} N_i = \max_{1\le i\le r} N_i \le x/y$$
        as all $N_i$ are smaller than $x/y;$
        \item else, let $\ell$ the smallest index such that $\prod_{i=1}^\ell N_i \ge y,$ and $\I = \{1,\ldots, \ell\}.$ This way, since all the $N_i$ are smaller than $y$ in this case we have $$y \le \prod_{i\in\I} N_i \le y^2 \le x/y$$ as $y\le x^{1/3}.$
    \end{itemize}

    Noting $A = \prod_{i\in\I} N_i$ and $B = \prod_{i\not\in\I} N_i,$ we can then write 
    $$S_{II}(\chi, \lambda, N_1, \ldots, N_r) = \sum_{\substack{A\le a \le 2^rA \\ B\le b\le 2^rB \\ ab \le x}} h_1(a) h_2(b) \chi(ab) \e(\lambda ab),$$
    where $h_1 = \ast_{i\in\I} \left(g_i\Ind_{(N_i, 2N_i]}\right)$ and $h_2 = \ast_{i\not\in\I} \left(g_i\Ind_{(N_i, 2N_i]}\right)$ depend only on the $N_i$ and are supported on $[A, 2^rA]$ and $[B, 2^rB]$ respectively.

    These type II sums can be estimated on average over $\chi$ by Lemma \ref{type-II}. Once $N_1,\ldots, N_r$ (and thus $A, B, h_1, h_2$) are fixed, we thus have
    \begin{align}
        \sum_{q\le Q} \starsum_{\chi\Mod q} \max_{|\lambda|\le\theta} \big|S_{II}(\chi,\lambda, N_1, \ldots, N_r)\big| &= \Xi(h_1 * h_2, x, Q, \theta) \notag 
        \\&\ll \left(x + x^{1/2}Q(A+B)^{1/2} + x^{1/2}Q^2 + xQ^2\theta^{1/2}\right)(\log x)^{O(r)} \notag
        \\&\ll \left(x + xQ/y^{1/2} + x^{1/2}Q^2 + xQ^2\theta^{1/2}\right)(\log x)^{O(r)}\label{sum-II}
    \end{align}
    since by construction $AB\le x, y\le A\le x/y$ and thus $B\le x/A \le x/y.$ Summing over $\ll (\log x)^r$ dyadic rectangles $N_1,\ldots, N_r$ then gives the same bound for $\sum_q \frac{q}{\phi(q)} \sum_{\chi\Mod q}^* \max_{|\lambda|\le\theta} \big|S_{II}(\chi,\lambda)\big|.$

    Summing over $\chi$ in (\ref{sum-decomp}) using (\ref{sum-I}) and (\ref{sum-II}), we then get $$\Xi(f,x, Q, \theta)\ll \left(yQ^{5/2} + x^{1/2}Q^2 + x + xQ/y^{1/2} + xQ^2\theta^{1/2}\right) (\log x)^{O(r)}$$ for all $y\le y_{\max} := \min\left\{x^{1/3}, 1/2Q\theta\right\}.$ The conclusion then follows from taking $y = \min\left\{x^{2/3}/Q, y_{\max}\right\}.$
\end{proof}

We now consider the specific case of $f_k(n) = \Ind_{\Omega(n) = k}$ and decompose it in order to deduce Proposition \ref{xi-pik} from Theorem \ref{main-bv}.

\begin{proof}[Proof of Proposition \ref{xi-pik}]
    %Following Drappeau-Topacogullari \cite[part 3.2]{drappeau},
    We start by reducing the problem to functions supported on $x^{1/2}$-friable integers. To do this, we write $f^+(n) = \Ind_{P^-(n) > x^{1/2}}$ and $f^-(n) = \Ind_{P^+(n) \le x^{1/2}}$ the indicator functions of $x^{1/2}$-rough and $x^{1/2}$-friable integers respectively,  and note the equality between multiplicative functions
    $$1 * (\mu f^-) = f^+$$ which is easily checked on prime powers. Now, for $n\le x,$ $n$ has either zero or one prime factors greater than $x^{1/2},$ so that
    \begin{align*}
        f_k(n) &= \sum_{\substack{ab = n \\ P^+(a) \le x^{1/2} \\ P^-(b) > x^{1/2}}} f_k(a) \Ind_{b=1} + f_{k-1}(a) (1 -\Ind_{b=1})
        \\&= f_k f^- (n) + f_{k-1} f^- * f^+ (n) - f_{k-1} f^- (n).
    \end{align*}
    Using the identity $1 * (\mu f^-) = f^+,$ we thus have the identity
    \begin{equation}\label{step8}
        (f_k)_{|[1,x]} = \big(f_k f^- + (f_{k-1} f^-) * (\mu f^-) * 1 - f_{k-1} f^-\big)_{|[1,x]}. 
    \end{equation}
    In order to use Theorem \ref{main-bv}, it is then sufficient to split $f_k f^-$ and $\mu f^-$ into functions supported in $[1, x^{2/3}].$ To do this, for $x^{1/3} < n \le x$ with $P^+(n) \le x^{1/2},$ we write its prime factorisation $n = p_1 \ldots p_r$ with $p_1 \ge p_2 \ge \ldots \ge p_r,$ and consider the lowest index $\ell$ such that $p_1\ldots p_\ell > x^{1/3}.$ Then, $a = p_1\ldots p_\ell$ and $b = p_{\ell+1}\ldots p_r$ are the only $a$ and $b$ verifying all the conditions $$ab = n ; P^-(a) \ge P^+(b) ; x^{1/3} < a \le x^{1/3}P^{-}(a).$$ Moreover, we always have $b = n/a \le x^{2/3}$ and $a\le x^{2/3}$ (as either $p_1 > x^{1/3}$ in which case $a = p_1 = P^+(n) \le x^{1/2},$ or $p_1 \le x^{1/3}$ in which case $a = x^{1/3}p_\ell \le x^{1/3} p_1 \le x^{2/3}$). Thus, we may write for $n\le x$ and an arithmetic function $g,$
    \begin{equation}\label{step9}
        gf^{-}(n) = \sum_{ab = n} \Ind_{a \le x^{2/3}} \Ind_{x^{1/3} < a \le x^{1/3}P^{-}(a)} f^-(a) \Ind_{b \le x^{2/3}} g(ab) \Ind_{P^-(a) \ge P^+(b)}.
    \end{equation}
    This is almost a convolution of two bounded functions supported on $[1, x^{2/3}];$ we still need to separate the variables in the term $g(ab) \Ind_{P^-(a) \ge P^+(b)},$ where $g = \mu$ or $g = f_k.$

    For $g = \mu,$ we write $$\mu(ab) \Ind_{P^-(a) \ge P^+(b)} = \mu(a) \mu(b) \Ind_{P^-(a) > P^+(b)}$$ (as, when $P^-(a) \ge P^+(b),$ $a$ and $b$ share a common factor if and only if $P^-(a) = P^+(b).$) We may then use \cite[Lemma 13.11]{ik} to write $$\Ind_{P^-(a) > P^+(b)} = \int_{-\infty}^{+\infty} h(t) P^{-}(a)^{\ii t} P^+(b)^{-\ii t} \de t,$$ valid for all $a,b\le x^{2/3}$ with a function $h$ depending only on $x$ and verifying $\int_{-\infty}^{+\infty} |h(t)| dt \ll \log x.$ Thus, for $g = \mu,$ (\ref{step9}) becomes 
    \begin{equation}\label{mu-split}
        \mu f^-(n) = \int_{-\infty}^{\infty} h(t) \sum_{ab = n} g_1(a,t) g_2(b,t) 
    \end{equation}
    for some functions $g_1, g_2$ verifying $|g_i(a,t)| \le 1$ and $g_i(a,t) = 0$ if $a> x^{2/3}$ for all $i\in\{1,2\}$ and $t\in \R.$

    For $g = f_k,$ we write \begin{equation}\label{step10}
        f_k(ab) = \sum_{\ell = 0}^k f_\ell (a) f_{k-\ell}(b),
    \end{equation}
    and again separate the variables in $\Ind_{P^-(a) \ge P^+(b)}$ using \cite[Lemma 13.11]{ik} to obtain from (\ref{step9}) an expression of $f_k f^-(n)$ similar to (\ref{mu-split}), albeit with an extra sum over $\ell.$ Injecting these expressions in (\ref{step8}), we may then %apply the linear operator $\Xi(\cdot, x, Q,\lambda)$ to (\ref{step8}) and bound all the terms in the right-hand side 
    use Theorem~\ref{main-bv} on each term to get $$\Xi(f_k, x, Q,\theta) \ll \left(x^{1/2}Q^2 + x^{5/6}Q + x + xQ^2\theta^{1/2}\right)(\log x)^{O(1)}$$ as desired. The sum over $\ell$ coming from (\ref{step10}) does cost us an extra $O(k)$ factor and the integrals weighted by $h(t)$ each cost us an extra $O(\log x)$ factor, but they are absorbed by the $(\log x)^{O(1)}$ factor.
\end{proof}
\begin{remark}
    When dealing with $h_k = \Ind_{\omega(n) = k}$ instead of $f_k,$ we again use that when $P^-(a) \ge P^+(b),$ the only possible common factor of $a$ and $b$ would be $P^-(a) = P^+(b).$ We therefore have 
    $$h_k(ab) \Ind_{P^-(a) \ge P^+(b)} = \sum_{\ell = 0}^k f_\ell (a) f_{k-\ell}(b) \Ind_{P^-(a) > P^+(b)} + \sum_{\ell = 0}^k f_\ell (a) f_{k+1-\ell}(b) \Ind_{P^-(a) = P^+(b)}$$ and can proceed again with the decomposition.
\end{remark}

%--
\section{Application to lower bounds}\label{end}

The goal of this final section is to prove Theorem \ref{uniform-pik} using Theorem \ref{main} and a result from the author's previous paper \cite{self}, reminded below.

\begin{theorem}[\cite{self}, Proposition 10]\label{y-le-q}
    Assume $f$ is supported on $[1,x],$ fix $Q \le x^{1/2}$ and assume there is a constant $C$ and functions $g$ and $\eta$ (which may depend on $x$) with $\eta(q) \ll (\log q)^C$ such that 
    \begin{equation}\label{bv-e}
        \sum_{q\le Q} |g(q)| \max_{(a,q) = 1} \left|F(x; \e_{a/q}) - x\frac{g(q)}{\phi(q)} \right| \ll \frac{x}{(\log x)^{C}}
    \end{equation} and for all $\delta > 0,$
    \begin{equation}\label{goal-g}
        \sum_{Q^{1-\delta}< q \le Q} \frac{|g(q)|^2}{\phi(q)} \gg \frac{\log Q}{\eta(Q)}.
    \end{equation}
    Then for $\eps > 0$ and $y \le Q^{1-\eps},$ we have $$\frac{1}{x}\sum_{-y< n\le x} \big|F(n+y;\e_\alpha) - F(n;\e_\alpha)\big|^2 \gg \frac{y\log Q}{\eta(Q)}.$$
\end{theorem}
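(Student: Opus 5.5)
The plan is to write the short-interval mean square as a weighted integral of the full exponential sum, and then bound that integral from below by duality (Cauchy--Schwarz) against an explicit model built from the major-arc main terms $x g(q)/\phi(q)$. Let $S(\gamma) := \sum_{m\le x} f(m)\e(\gamma m)$ and $E_y(\beta) := \sum_{1\le h\le y}\e(h\beta)$. By Parseval in $n$,
$$\sum_{-y<n\le x}\big|F(n+y;\e_\alpha)-F(n;\e_\alpha)\big|^2 = \int_0^1 |S(\gamma)|^2\, w(\gamma-\alpha)\de\gamma,$$
where $w := |E_y|^2$. This weight satisfies $w\ge 0$, $w(\beta)\gg y^2$ for $\|\beta\|\le 1/4y$, and $\int_0^1 w = \lfloor y\rfloor$. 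For any test function $M$, Cauchy--Schwarz gives
$$\int |S|^2 w(\cdot-\alpha) \ge \frac{\big|\int S\,\overline{M}\,w(\cdot-\alpha)\big|^2}{\int |M|^2 w(\cdot-\alpha)}.$$

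I will take $M$ to be the model exponential sum restricted to large moduli:
$$M(\gamma) := \sum_{Q^{1-\delta}<q\le Q}\frac{g(q)}{\phi(q)}\sum_{(a,q)=1} V\Big(\gamma-\frac aq\Big),\qquad V(\beta) := \sum_{m\le x}\e(\beta m),$$
with $\delta$ small in terms of $\eps$. The point is that $V(\cdot-a/q)$ is a spike of height $x$ and width $1/x$.

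\textbf{Denominator.} Distinct fractions with denominators at most $Q\le x^{1/2}$ are at least $1/Q^2\ge 1/x$ apart. Hence the cross terms $\int V(\gamma-a/q)\overline{V(\gamma-a'/q')}\,w$ are small. This is a routine estimate using $|V(\beta)|\ll\min(x,\|\beta\|^{-1})$ and the fact that $w$ is essentially constant at scale $1/x$. One gets
$$\int|M|^2 w(\cdot-\alpha)\approx x\sum_q \frac{|g(q)|^2}{\phi(q)^2}\sum_{(a,q)=1} w\Big(\frac aq-\alpha\Big).$$

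\textbf{Numerator.} Up to the same kind of off-diagonal error,
$$\int S\overline{V(\cdot-a/q)}\,w(\cdot-\alpha)\approx w\Big(\frac aq-\alpha\Big)\sum_{m\le x} f(m)\e\Big(\frac{am}{q}\Big) = w\Big(\frac aq-\alpha\Big)F(x;\e_{a/q}).$$
I then replace $F(x;\e_{a/q})$ by $x g(q)/\phi(q)$. The total error is at most $\max_\beta w(\beta)\cdot\sum_q\frac{|g(q)|}{\phi(q)}\phi(q)\max_a|F-xg/\phi|\ll y^2 x(\log x)^{-C}$ by (\ref{bv-e}). Since the main term is expected to be of size $xy\log Q/\eta(Q)$, this error is acceptable provided the main terms are large. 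Using $\sum_a w\le y\,\phi(q)\cdot O(1)$, the loss factor is really $y$, not $y^2$, so the comparison works because $\eta(Q)\ll(\log x)^{C}$. The numerator is therefore $\approx x\sum_q \frac{|g(q)|^2}{\phi(q)^2}\sum_{(a,q)=1} w(a/q-\alpha)$, i.e. the same quantity as the denominator. The ratio collapses to
$$x\sum_q\frac{|g(q)|^2}{\phi(q)^2}\sum_{(a,q)=1}w\Big(\frac aq-\alpha\Big).$$

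\textbf{Equidistribution of Farey points.} The remaining task is to show $\sum_{(a,q)=1}w(a/q-\alpha)\gg \phi(q)\,y$ for $q>Q^{1-\delta}\ge y^{1+\eps/2}$, uniformly in $\alpha$. I expand $w(\beta)=\sum_{|h|<y}(y-|h|)\e(h\beta)$ and sum over $a$ using Ramanujan sums $c_q(h)$. The $h=0$ term gives $y\,\phi(q)$. The other terms involve $|c_q(h)|\le (h,q)$, so they contribute $\ll y\sum_{0<|h|<y}(h,q)\ll y^{2}\tau(q)$, which is $o(y\phi(q))$ since $q\ge y^{1+\eps/2}$. With this, the lower bound becomes $xy\sum_{Q^{1-\delta}<q\le Q}|g(q)|^2/\phi(q)\gg xy\log Q/\eta(Q)$ by (\ref{goal-g}), which is the claim after dividing by $x$.

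\textbf{Expected main obstacle.} I expect the hardest step to be the rigorous control of the off-diagonal and tail contributions. Specifically, one must show that pairing $S$ against $V(\cdot-a/q)$ really isolates $F(x;\e_{a/q})$: the tails $\|\beta\|^{-1}$ of the spikes summed over all $\asymp Q^2$ fractions must not swamp the main term. If that fails, I would first try replacing $V$ by a smoothed spike (the Fourier transform of a smooth cutoff of $[1,x]$), which decays rapidly. The cost is a small amount of extra smoothing in the Bombieri--Vinogradov input, to be handled by partial summation.
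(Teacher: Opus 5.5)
Your overall strategy is right: Parseval in $n$, duality against the major-arc model $\sum_{Q^{1-\delta}<q\le Q}\frac{g(q)}{\phi(q)}c_q(\cdot)$, hypothesis (\ref{bv-e}) in the numerator, and your Ramanujan-sum evaluation $\sum_{(a,q)=1}w(a/q-\alpha)=y\phi(q)\big(1+O(y\tau(q)/\phi(q))\big)$ for $q\ge y^{1+\eps/2}$ are exactly what is needed. That last estimate also makes the error from (\ref{bv-e}) cost only a factor $y$. The paper quotes this statement from \cite{self} without proof, so I am judging your argument on its own.

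\textbf{The gap is the denominator.} You discard cross terms because distinct fractions are $\ge Q^{-2}\ge x^{-1}$ apart, but this fails at the top of the allowed range. For $Q\asymp x^{1/2}$, neighbouring fractions $a/q\ne a'/q'$ with $q,q'\in(Q^{1-\delta},Q]$ are at distance $1/(qq')\asymp 1/x$, which is precisely the width of the spikes. So a single cross term $\approx w(\alpha-a/q)V(a'/q'-a/q)$ is as large as a diagonal term. Bounding the cross terms in absolute value with $|V(\beta)|\ll\|\beta\|^{-1}$ gives an off-diagonal total of order $yQ^2$ times powers of $\log Q$. Done naively, this also needs a pointwise bound on $g$, which the hypotheses do not provide. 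Writing $\Sigma:=\sum_{Q^{1-\delta}<q\le Q}|g(q)|^2/\phi(q)$, that total is smaller than the diagonal $xy\Sigma$ only for $Q\le x^{1/2}(\log x)^{-A}$. Smoothing $V$ does not help, since the spike width $1/x$ is forced by the length of the sum. You also should not aim for an asymptotic in the denominator: Cauchy--Schwarz only needs an upper bound of the right order.

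\textbf{The missing ingredient is the large sieve, applied to the whole quadratic form.} It is cleanest to stay in $n$-space. Put $b(m):=\sum_{Q^{1-\delta}<q\le Q}\frac{g(q)}{\phi(q)}c_q(m)$ for all $m\in\Z$ (untruncated), $\Delta_n:=\sum_{1\le h\le y}f(n+h)\e(\alpha(n+h))$ and $\Psi_n:=\sum_{1\le h\le y}b(n+h)\e(\alpha(n+h))$, and apply Cauchy--Schwarz over $-y<n\le x$. Since $m\in[1,x]$ and $1\le h\le y$ force $n=m-h\in(-y,x]$, the cross sum is exactly
\[
\sum_{-y<n\le x}\Delta_n\overline{\Psi_n}=\sum_{q}\frac{\overline{g(q)}}{\phi(q)}\sum_{(a,q)=1}w\Big(\alpha-\frac aq\Big)F(x;\e_{a/q}),
\]
with no off-diagonal or endpoint error. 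If you truncate $V$ to $[1,x]$ as proposed, you instead pick up endpoint terms involving $f(m)$ for $m<y$ or $m>x-y$, which nothing in the hypotheses controls.

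On the other side, $\Psi_n=\e(\alpha n)\sum_{q,a}\frac{g(q)}{\phi(q)}E_y\big(\alpha+\frac aq\big)\e\big(\frac{an}{q}\big)$. The dual large sieve for the $Q^{-2}$-spaced Farey points, combined with your Ramanujan-sum estimate, gives
\[
\sum_{-y<n\le x}|\Psi_n|^2\le(x+y+Q^2)\sum_q\frac{|g(q)|^2}{\phi(q)^2}\sum_{(a,q)=1}w\Big(\alpha-\frac aq\Big)\ll xy\,\Sigma
\]
uniformly for $Q\le x^{1/2}$. This bound sees $g$ only through $\Sigma$, so no pointwise information on $g$ is needed.

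Your numerator analysis gives $xy\Sigma(1+o(1))+O(xy(\log x)^{-C})$, and the error is negligible because $\eta(Q)\ll(\log x)^C$. Dividing the squared numerator by the denominator yields $\frac1x\sum_n|\Delta_n|^2\gg y\Sigma$, which is the claim.
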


Here, (\ref{bv-e}) will be deduced from Theorem \ref{main} with $\theta = 0.$ We thus introduce the normalized main term appearing in Theorem \ref{main},
\begin{equation}\label{def-g}
    g_{k,x}(q) := \frac{1}{\Pi_k(x)} \sum_{st = n} \mu(s)t \#\{n \le x : t|n, \Omega(n) = k\} = \frac{1}{\Pi_k(x)} \sum_{st = n} \mu(s)t \Pi_{k-\Omega(t)}(x/t).
\end{equation}
Since we only need the lower bound (\ref{goal-g}) on $\sum_q {\frac{|g_{k,x}(q)|^2}{\phi(q)}}$, we may only count the terms where $q$ is prime, which simplifies the expression of $g_{k,x}(q).$ Indeed, when $q=p$ is prime, the sum in (\ref{def-g}) contains only two terms, so that \begin{equation}
        g_{k,x}(p) = \frac{p\Pi_{k-1}(x/p)}{\Pi_k(x)} - 1 \label{q-prime}.
    \end{equation}
In this case, we obtain the following estimates on $g_{k,x}(p).$

\begin{lemma}\label{estim-pik}
    Fix $\eps > 0.$ When $p \le x^{1-\eps}$ is prime and $k \le (2-\eps) \log\log x,$ we have \begin{equation}\label{g-far}
        g_{k,x}(p) = \left( \frac{k}{\log\log x}(1-t)^{\frac{k-\log\log x}{\log\log x}} - 1 + O\left(\frac{1}{\log\log x}\right)\right)
    \end{equation} where $t := \frac{\log p}{\log x}.$

    Moreover when $|k-\log\log x| \le C$ for some fixed $C,$ we have with $\ell := \log\log x - \log\log (x/p) = -\log(1-t),$
    \begin{equation}\label{g-near}
        g_{k,x}(p) = \left(\frac{(k-\log\log x - 1 - H'(1))(1-\ell) + \ell - \ell^2/2}{\log\log x} + O\left(\frac{1}{(\log\log x)^2}\right) \right)
    \end{equation}
    where $H$ has been defined in (\ref{def-H}).
\end{lemma}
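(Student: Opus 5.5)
We start from the identity (\ref{q-prime}), $g_{k,x}(p) = p\Pi_{k-1}(x/p)/\Pi_k(x) - 1$, and evaluate the ratio with the Sathe--Selberg formula (Lemma \ref{sathe-selberg}) applied twice: at $x$ with $k$, and at $x/p$ with $k-1$. Since $p\le x^{1-\eps}$, we have $x/p\ge x^{\eps}$, so $\log\log(x/p) = \log\log x + \log(1-t) = L - \ell$ with $L:=\log\log x$ and $\ell = -\log(1-t)\le \log(1/\eps)$. Also $k-2 \le (2-\eps)L \le (2-\eps/2)(L-\ell)$ for $x$ large, so Lemma \ref{sathe-selberg} applies at $x/p$ (with $\eps/2$). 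The case $k=1$ (where $\Pi_0$ is trivial) can be handled separately or excluded.

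The ratio becomes
$$\frac{p\Pi_{k-1}(x/p)}{\Pi_k(x)} = \frac{\log x}{\log(x/p)}\cdot\frac{(L-\ell)^{k-2}}{(k-2)!}\cdot\frac{(k-1)!}{L^{k-1}}\cdot\frac{H\big(\frac{k-2}{L-\ell}\big)+O(1/L)}{H\big(\frac{k-1}{L}\big)+O(1/L)}.$$
Here $\log x/\log(x/p) = (1-t)^{-1}$ and $(k-1)(L-\ell)^{k-2}/L^{k-1} = \frac{k-1}{L}(1-\ell/L)^{k-2}$. Since $k/L$ is bounded, $(1-\ell/L)^{k-2} = \e^{-\ell k/L}(1+O(1/L)) = (1-t)^{k/L}(1+O(1/L))$. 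This gives the main factor $\frac{k}{L}(1-t)^{(k-L)/L}$ up to relative $O(1/L)$ errors. The replacement $(k-1)/L\to k/L$ costs $O(1/L)$ absolutely, since the whole expression is bounded.

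For the $H$-ratio we use that $H$ is holomorphic, nonvanishing and bounded below on $[0,2-\eps/2]$ (the product converges for $|z|<2$, and $H$ is positive on the real interval). Since $\frac{k-2}{L-\ell}-\frac{k-1}{L} = O(1/L)$, the ratio is $1+O(1/L)$. This proves (\ref{g-far}).

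For (\ref{g-near}) the same computation must be carried to second order, which is the main obstacle: every $O(1/L)$ term above must now be made explicit, with error $O(1/L^2)$. That requires a second-order Sathe--Selberg expansion, with the $O(1/L)$ term made explicit, which is not stated in the paper. I would first look for it in the Selberg--Delange form $\Pi_k(x) = \frac{x}{\log x}\frac{L^{k-1}}{(k-1)!}\big(H(r) - \frac{r H''(r)}{2L}\cdots\big)$, with $r=(k-1)/L$, as given by Tenenbaum's book. Alternatively, since $|k-L|\le C$ forces $r$ and $\tilde r = (k-2)/(L-\ell)$ to be $1+O(1/L)$, we can expand $H$ around $1$ using $H(1)=1$; the term $H'(1)$ then appears through $H(\tilde r)/H(r) = 1 + H'(1)(\tilde r - r) + O(L^{-2})$.

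Writing $k = L+m$ with $m = O(1)$, we expand each factor:
\begin{itemize}
\item $\frac{k-1}{L} = 1+\frac{m-1}{L}$;
\item $(1-t)^{-1}(1-\ell/L)^{k-2} = \exp\big(\ell - (k-2)(\ell/L + \ell^2/2L^2)\big) = 1 + \frac{(2-m)\ell - \ell^2/2}{L} + O(L^{-2})$;
\item $\tilde r - r = \frac{(k-2)\ell - L}{L^2}+O(L^{-2}) = \frac{\ell-1}{L}+O(L^{-2})$.
\end{itemize}
Multiplying out and subtracting $1$ should yield a linear expression in $m$ and $\ell$, plus the $-\ell^2/2$ term, matching the shape in (\ref{g-near}). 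I would check the constants carefully against the stated form $(m-1-H'(1))(1-\ell)+\ell-\ell^2/2$, including the second-order correction in the Sathe--Selberg formula, which also contributes at order $1/L$ through its dependence on $r$.
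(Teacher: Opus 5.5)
Your proposal follows the paper's proof. Part (\ref{g-far}) comes from Lemma \ref{sathe-selberg} applied at $(x,k)$ and at $(x/p,k-1)$, with the same expansions you give. Part (\ref{g-near}) comes from the second-order formula the paper quotes as (\ref{near-ll}), citing Balazard and Tenenbaum Thm.~II.6.5; this is your $H(r)-rH''(r)/(2L)$ expanded at $r=1+O(1/L)$.

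Your final remark is wrong, though: the second-order term does \emph{not} contribute at order $1/L$ to the ratio. Indeed $\frac{\tilde r H''(\tilde r)}{2(L-\ell)}-\frac{rH''(r)}{2L}=O(L^{-2})$, so no $H''(1)$ survives; in the paper the $-H''(1)/2$ cancels between numerator and denominator. Your three bulleted factors therefore already multiply to $1+\frac{m-1+(2-m)\ell-\ell^2/2+H'(1)(\ell-1)}{L}+O(L^{-2})$, which is exactly (\ref{g-near}).

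Likewise, your ``alternative'' of expanding $H$ around $1$ only works on top of such a second-order formula. With the bare $O(1/L)$ error of Lemma \ref{sathe-selberg}, the error is as large as $g_{k,x}(p)$ itself in this regime.
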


The first estimate (\ref{g-far}) will be derived from the Sathe-Selberg estimates on $\Pi_k$ (Lemma \ref{sathe-selberg}). However, it is not sufficient to prove Theorem \ref{uniform-pik} near $k = \log\log x,$ where the main term in (\ref{g-far}) vanishes. Fortunately, Selberg's method allows to precise the $O\left(\frac{1}{\log\log x}\right)$ term appearing in the expression of $\Pi_k(x)$ (see \cite[Theorem II.6.5]{tenen-book}). In particular, when $|k - \log\log x| \le C$ for some fixed $C,$ we have (\cite[equation (34)]{balazard})
\begin{equation}\label{near-ll}
    \Pi_k(x) = \frac{x}{\log x}\frac{(\log\log x)^{k-1}}{(k-1)!}\cdot\left(1 + \frac{(k-1-\log \log x) H'(1) - H''(1)/2}{\log\log x} + O\left(\frac{1}{(\log\log x)^2}\right)\right),
\end{equation}
which we use to establish the estimate (\ref{g-near}).

\begin{proof}[Proof of Lemma \ref{estim-pik}]
    Noting $t := \frac{\log p}{\log x} \le 1 - \eps, L := \log\log x$ and $\ell := -\log(1-t) \ll 1,$ we have by Lemma~\ref{sathe-selberg}
    \begin{align*}
        p\Pi_{k-1}(x/p) &= \frac{x}{\e^{-\ell} \log x} \frac{(L-\ell)^{k-2}}{(k-2)!} \cdot\left(H\left(\frac{k-2}{L-\ell}\right) + O\left(\frac{1}{L}\right)\right)
        \\&= \frac{x}{\log x} \frac{L^{k-2}}{(k-2)!} \e^{\ell - (k-2)\frac{\ell}{L} + O\left(\frac{k}{L^2}\right)} \left(H\left(\frac{k-1}{L}\right) + O\left(\frac{1}{L}\right)\right)
        \\&= \Pi_k(x) \frac{k-1}{L} \e^{-\ell \frac{k-L}{L}} \left(1 + O\left(\frac{1}{L}\right)\right)
    \end{align*}
    since $H'\left(\frac{k-1}{L}\right) \ll 1$ for $k \le (2-\eps)L,$
    so that by formula (\ref{q-prime}), $$g_{k,x}(p) = \frac{k}{L} \e^{-\ell \frac{k-L}{L}} - 1 + O\left(\frac{1}{L}\right)$$ as desired. 

    We now consider the case $k = L + O(1),$ in which case the main term above vanishes. In this case, it follows from estimate~(\ref{near-ll})
    \begin{align*}
        p\Pi_{k-1}(x/p) &= \frac{x}{\e^{-\ell} \log x} \frac{(L-\ell)^{k-2}}{(k-2)!} \cdot\left(1 + \frac{(k-2 - L + \ell) H'(1) - H''(1)/2}{L - \ell} + O\left(\frac{1}{L^2}\right)\right)
        \\&= \frac{x}{\log x} \frac{L^{k-2}}{(k-2)!} \e^{\ell + (k-2)\log\left(1-\frac{\ell}{L}\right)} \left(1 + \frac{(k-1 - L) H'(1) - H''(1)/2}{L} + \frac{(\ell-1) H'(1)}{L} + O\left(\frac{1}{L^2}\right)\right)
        \\&= \Pi_k(x) \frac{k-1}{L} \e^{\ell \frac{L-k+2}{L} - \frac{k\ell^2}{2L^2} + O\left(\frac{1}{L^2}\right)} \left(1 + \frac{(\ell-1) H'(1)}{L} + O\left(\frac{1}{L^2}\right)\right)
        \\&= \Pi_k(x) \left(1 + \frac{k-1-L + \ell(L-k+2) - \ell^2/2 + (\ell-1) H'(1)}{L} + O\left(\frac{1}{L^2}\right)\right),
    \end{align*}
    so that by formula (\ref{q-prime}), $$g_{k,x}(p) = \frac{k-1-L + \ell(L-k+2) - \ell^2/2 + (\ell-1) H'(1)}{L} + O\left(\frac{1}{L^2}\right)$$ as desired. 
\end{proof}

We have now all the ingredients to prove Theorem \ref{uniform-pik}.

\begin{proof}[Proof of Theorem \ref{uniform-pik}]
    We apply Theorem~\ref{y-le-q} to the normalised function $f$ defined by $$f(n) := \frac{x}{\Pi_k(x)} \Ind_{n\le x} \Ind_{\Omega(n) = k}$$ with $Q := \sqrt{yx^{1/3}} = x^{1/3-\eps/2}$ and $g = \Tilde{g}_{k,x} := g_{k,x} \Ind_{\Po},$ where $\Po$ is the set of primes. Then Lemma \ref{estim-pik} ensures $\Tilde{g}_{k,x} \ll 1,$ and the Sathe-Selberg estimates (Lemma \ref{sathe-selberg}) ensure $\frac{x}{\Pi_k(x)} \ll (\log x)^A$ for some $A$ when $k\le (2-\eps')\log\log x,$ so that $$\sum_{q\le Q} \big|\Tilde{g}_{k,x}(q)\big| \max_{(a,q) = 1} \left|F(x; a/q) - x\frac{\Tilde{g}_{k,x}(q)}{\phi(q)} \right| \ll \frac{x}{\Pi_k(x)} \sum_{p\le Q} \max_{(a,p) = 1} \left|\Pi_k(x; a/p) - \Pi_k(x)\frac{g_{k,x}(p)}{\phi(p)} \right| \ll \frac{x}{(\log x)^{C}}$$ for all $C$ and for $Q = x^{1/3-\eps/2}$ by Theorem \ref{main} and the bound (\ref{bv-e}) holds.

    Moreover, noting $\kappa := \frac{k}{\log\log x} \le 2-\eps',$ we have for $Q' \le Q,$
    \begin{equation}\label{step-int}
        \sum_{Q' < q \le 2Q'} \frac{|\Tilde{g}_{k,x}(q)|^2}{\phi(q)} = \sum_{Q' < p\le 2Q'} \frac{|g_{k,x}(p)|^2}{p-1} \gg \frac{1}{\log Q'} \left(\kappa\left(1 - \frac{\log Q'}{\log x}\right)^{\kappa-1} - 1 + O\left(\frac{1}{\log\log x}\right)\right)^2
    \end{equation}
    by Lemma \ref{estim-pik}. If $\kappa - 1 \gg 1,$ inequality (\ref{step-int}) implies for all $\delta > 0,$ $$\sum_{Q' < q \le 2Q'} \frac{|\Tilde{g}_{k,x}(q)|^2}{\phi(q)} \gg \frac{1}{\log Q'}$$ as long as $\left|\frac{\log Q'}{\log x} - \kappa^{-\frac{1}{\kappa-1}}\right| \ge \delta/20.$ In particular, summing over a larger interval $(1-\delta)\frac{\log Q}{\log x} \le \frac{\log Q'}{\log x} \le \frac{\log Q}{\log x},$ we get 
    $$\sum_{Q^{1-\delta} \le q \le Q} \frac{|\Tilde{g}_{k,x}(q)|^2}{\phi(q)} \gg 1$$ and the bound (\ref{goal-g}) holds with $\eta(Q) = \frac{1}{\log Q}$ when $\kappa-1 \gg 1.$

    Otherwise, estimating the right-hand side of (\ref{step-int}) around $\kappa=1,$ we get
    \begin{equation*}
        \sum_{Q' < q \le 2Q'} \frac{|\Tilde{g}_{k,x}(q)|^2}{\phi(q)} \gg \frac{1}{\log Q'} \left((\kappa-1)\left(1 + \log\left(1 - \frac{\log Q'}{\log x}\right)\right) + O\left((\kappa-1)^2\right) + O\left(\frac{1}{\log\log x}\right)\right)^2.
    \end{equation*}
    Since we always have $1+\log\left(1 - \frac{\log Q'}{\log x}\right) >0$ for $Q' \le Q$ as $\frac{\log Q}{\log x} < 1/3 < 1-1/\e,$ it follows $$\sum_{Q' < q \le 2Q'} \frac{|\Tilde{g}_{k,x}(q)|^2}{\phi(q)} \gg \frac{(\kappa-1)^2}{\log Q'}$$ whenever $\frac{C}{\log\log x} \le |\kappa-1| \le \delta'$ for some fixed $C, \delta'.$ In this case summing on $Q'$ gives us the lower bound (\ref{goal-g}) with $\eta(Q) = \frac{(\kappa-1)^2}{\log Q}.$

    It remains the case $\kappa-1 \ll \frac{1}{\log\log x},$ that is, $|k - \log\log x| \le C$ for some fixed $C.$ In that case, we can use the estimate (\ref{g-near}) to get 
    \begin{equation}\label{step-P}
        \sum_{Q' < q \le 2Q'} \frac{|\Tilde{g}_{k,x}(q)|^2}{\phi(q)} = \sum_{Q' < p\le 2Q'} \frac{|g_{k,x}(p)|^2}{p-1} \gg \frac{1}{\log Q'} \left(\frac{P_\kappa\left(-\log\left(1 - \frac{\log Q'}{\log x}\right)\right)}{\log\log x} + O\left(\frac{1}{(\log\log x)^2}\right)\right)^2
    \end{equation}
    where $P_\kappa(\ell) = -\ell^2/2 + a_1(\kappa)\ell + a_0(\kappa)$ with $a_1(\kappa), a_0(\kappa) = O(1)$ when $|k - \log\log x| \le C.$ In particular there are values $r_1(\kappa), r_2(\kappa)$ such that for all $\delta > 0,$ we have when $\left|\frac{\log Q'}{\log x} - r_1(\kappa)\right| \ge \delta/30$ and $\left|\frac{\log Q'}{\log x} - r_2(\kappa)\right| \ge \delta/30,$ $$\sum_{Q' < q \le 2Q'} \frac{|\Tilde{g}_{k,x}(q)|^2}{\phi(q)} \gg \frac{1}{\log Q'\cdot (\log\log x)^2}.$$ Since $\frac{\log Q'}{\log x}$ spans an interval of length $\delta\frac{\log Q}{\log x} \ge \delta/6$ when $Q^{1-\eps} \le Q'\le Q,$ we get $$\sum_{Q^{1-\eps} \le q \le Q} \frac{|\Tilde{g}_{k,x}(q)|^2}{\phi(q)} \gg \frac{1}{(\log\log x)^2}$$ and thus (\ref{goal-g}) holds with $\eta(Q) = \frac{1}{\log Q\cdot(\log\log x)^2}$ in this case.

    Thus, in all cases, the lower bound (\ref{goal-g}) holds for $\eta(Q) = \frac{1}{\log Q}\left(\frac{1 + |k-\log\log x|}{\log\log x}\right)^2,$ so that we get by Theorem~\ref{y-le-q} $$\frac{1}{x} \sum_{-y < n\le x} \left|\sum_{\substack{n< m\le n+y \\ 1 \le m \le x}} \frac{x}{\Pi_k(x)} f_k(m) \e(\alpha m)\right|^2 \gg y\left(\frac{1 + |k-\log\log x|}{\log\log x}\right)^2.$$
\end{proof}

\section*{Acknowledgements}

I would like to thank R\'egis de la Bret\`eche (Universit\'e Paris Cit\'e) and Sary Drappeau (Universit\'e Clermont Auvergne) for their guidance. 

\printbibliography

\end{document}